\documentclass[a4paper,12pt]{article}
\usepackage{amssymb,amsfonts,amsthm,mathrsfs}
\usepackage[namelimits,reqno]{amsmath}
\usepackage{graphicx}
\usepackage{float}
\usepackage[hidelinks=true]{hyperref}
\usepackage{textcomp}
\usepackage{verbatim}
\usepackage{enumerate}
\usepackage{hyperref}
\usepackage[T1]{fontenc}
\usepackage{authblk}
\usepackage{a4,graphicx}
\usepackage{tikz}
\usepackage{lipsum}
\usepackage{graphicx,subfigure}
\usepackage[makeroom]{cancel}
\usepackage{pifont}
\usepackage{color}
\allowdisplaybreaks[3]
\usepackage{vmargin}
\usepackage{fancyhdr}
\usepackage{epsfig}
\usepackage{a4,graphicx}
\usepackage{epsf,epic}
\usepackage{tabularx}
\usepackage{enumitem}
\makeatletter
\def\namedlabel#1#2{\begingroup
    #2%
    \def\@currentlabel{#2}%
    \phantomsection\label{#1}\endgroup
}
\makeatother

\usepackage[T1]{fontenc}
\usepackage[Conny]{fncychap}

\usepackage{inputenc}

\oddsidemargin 0 cm
\evensidemargin 0 cm
\textwidth 16.5 cm
\textheight 22.5 cm
\usepackage{geometry}

\theoremstyle{plain}
\newtheorem{theorem}{Theorem}[section]
\newtheorem{lemma}{Lemma}[section]

\newtheorem{corollary}{Corollary}[section]
\theoremstyle{definition}
\newtheorem{example}{Example}[section]

\newtheorem{remark}{Remark}[section]
\newcommand{\R}{\mathbb R}

\newcommand{\calL}{\mathcal{L}}

\newcommand{\calF}{\mathcal{F}}
\def\a{\alpha}
\def\b{\beta}
\def\g{\gamma}

\def\t{\theta}

\def\O{\Omega}

\def\d{\delta}

\def\r{\rho}
\def\s{\sigma}
\def\l{\lambda}
\def\vp{\varphi}

\def\ve{\varepsilon}



\rhead{{\sf \leftmark}}
 \lhead{\bf \thepage}
\setmarginsrb{2cm}{2cm}{2cm}{2cm}{0.5cm}{0.5cm}{1.5cm}{1.5cm}
\setcounter{secnumdepth}{3}

\if@twoside
\ifodd\c@page\else
\null\thispagestyle{chapterverso}\newpage
\if@twocolumn\null\newpage\fi
\fi
\fi
\def\ps@chapterverso{\ps@empty}%

\newcommand{\tnorm}[1]{{\left\vert\kern-0.25ex\left\vert\kern-0.25ex\left\vert #1
    \right\vert\kern-0.25ex\right\vert\kern-0.25ex\right\vert}}

\numberwithin{equation}{section}
\usepackage{fancyhdr}

\newcommand{\BlackBox}{\rule{1.5ex}{1.5ex}}  

\makeatletter
\newcommand{\leqnomode}{\tagsleft@true}
\newcommand{\reqnomode}{\tagsleft@false}
\makeatother
\makeatletter
\newcommand{\lefteqno}{\let\veqno\@@leqno}
\makeatother
\title{\bf\large Theoretical and numerical study of the decay in a viscoelastic Bresse System}
\date{\vspace{-5ex}}
\author[1]{\small Jamilu Hashim Hassan}
\author[2,3]{\small Salim A. Messaoudi}
\author[4]{\small Toufic El-Arwadi}
\author[5]{Mohamad El Hindi}
\affil[1,3]{\small \it Department of Mathematics and Statistics, King Fahd University of Petroleum and Minerals\\ P.O. Box 546, Dhahran 31261, Saudi Arabia.}
\affil[2]{\small \it Department of Mathematics, University of Sharjah, P.O. Box 27272, Sharjah, United Arab Emirates.}
\affil[4,5]{\it Department of Mathematics and Computer Science,Beirut Arab University\\ P.O. Box 11-5020, Beirut, Lebanon.}
\affil[1]{\small elhashim06@yhaoo.com}
\affil[2]{\small smessaoudi@sharjah.ac.ae}
\affil[4]{\small t.elarwadi@bau.edu.lb}
\affil[5]{\small myh223@student.bau.edu.lb}

\begin{document}
\reqnomode
\maketitle

\begin{abstract}
In this paper, we consider a one-dimensional finite-memory Bresse system with homogeneous Dirichlet-Neumann-Neumann boundary conditions. We prove some general decay results for the energy associated with the system in the case of equal and non-equal speeds of wave propagation under appropriate conditions on the relaxation function. In addition, we show by giving an example that in the case of equal speeds of wave propagation and for certain polynomially decaying relaxation functions, our result gives an optimal decay rate in the sense that the decay rate of the system is exactly the same as that of the relaxation function considered.
\end{abstract}


\section{\small{Introduction}}
\label{sec1}
Bresse system is a mathematical model that describes the vibration of a planar, linear shearable curved beam. The model was first derived by Bresse \cite{Bresse1859} and it consists of three coupled wave equations given by
\begin{equation}\label{e1s1}
\begin{array}{ll}
\r_1\vp_{tt}-k_1(\vp_x+\psi+lw)_x-lk_3(w_x-\vp)+F_1=0& \mathrm{in\,\,\,}(0,L)\times(0,\infty),\\
\\
\r_2\psi_{tt}-k_2\psi_{xx}+k_1(\vp_x+\psi+lw)+F_2=0& \mathrm{in\,\,\,}(0,L)\times(0,\infty),\\
\\
\r_1w_{tt}-k_3(w_x-\vp)_x+lk_1(\vp_x+\psi+lw)+F_3=0& \mathrm{in\,\,\,}(0,L)\times(0,\infty),
\end{array}
\end{equation}
where $\vp,\psi,w$ represent the vertical displacement, the shear angle, and the longitudinal displacement, respectively; $\r_1,\r_2,k_1,k_2,k_3,l$ are positive parameters and $F_1,F_2,F_3$ are external forces.

\par A lot of results dealing with well-posedness and asymptotic behaviour of the above system have been published. We start with the work of Santos $et \ al.$ \cite{Santos2010} from 2010, where they studied the Bresse system with Dirichlet-Dirichlet-Dirichlet boundary conditions and linear frictional damping acting on each equation, that is,
\begin{equation}\label{e2s1}
(F_1,F_2,F_3)=(\g_1\vp_t,\,\g_2\psi_t,\,\g_3w_t),
\end{equation}
where $\g_1,\g_2,\g_3>0$. They established an exponential decay rate for the system using spectral theory approach developed by Z. Liu and S. Zheng in \cite{Liu1999}. They also gave a numerical scheme using finite difference method to illustrate their theoretical result. Soriano $et \ al.$ \cite{Soriano2014} used the method developed by Lasiecka and Tataru in \cite{Lasiecka1993} and proved a uniform decay rate for the same system with a nonlinear frictional damping acting on the second equation and locally distributed nonlinear damping acting on the other equations. Precisely, the external forces are given by \[(F_1,F_2,F_3)=(\a(x)g_1(\vp_t),\, g_2(\psi_t),\, \g(x)g_3(w_t))\] with $\a,\g\in L^\infty(0,L)$ and the $g_i$'s are continuous and monotone increasing functions. The results of \cite{Santos2010} and \cite{Soriano2014} were established  without imposing any restriction on the speeds of wave propagation given by 
\begin{equation}\label{e3s1}
s_1=\sqrt{\frac{k_1}{\r_1}},\qquad s_2=\sqrt{\frac{k_2}{\r_2}},\qquad\mathrm{and}\qquad s_3=\sqrt{\frac{k_3}{\r_1}}.
\end{equation}
Alves $et \ al.$ \cite{Alves2015a} used the semigroup and spectral theory to obtain the exponential stability of the Bresse system with three controls at the boundary.
\par In the presence of dissipating terms in only one or two of the equations in system \eqref{e1s1}, the decay rates of the energy associated to the system depend totally on the speeds of the wave propagation. As illustrated in \cite{Alabau-Boussouira2011a}, Alabau-Boussouira $et \ al.$ studied \eqref{e1s1} with linear frictional damping acting on the second equation; that is, they used \eqref{e2s1}, with $\g_1=\g_3=0$ and $\g_2>0$ and showed that the system is exponentially stable if and only if it has equal speeds of wave propagation,
\begin{equation}\label{e4s1}
\frac{k_1}{\r_1}=\frac{k_2}{\r_2}=\frac{k_3}{\r_3}.
\end{equation}
As mentioned by many authors \cite{Alabau-Boussouira2011a, Alves2015}, relation \eqref{e4s1} is physically unrealistic. In the case of non-equal speeds of wave propagation, they proved polynomial stability with rates which can be improved with the regularity of the initial data. Fatori and Monteiro \cite{Fatori2012} improved this result in the case of non-equal speeds of wave propagation by proving optimal decay rate. Soriano $et \ al.$ \cite{Soriano2012} established the same exponential stability result as in \cite{Alabau-Boussouira2011a} by replacing the frictional damping with indefinite one; that is, they replaced $\g_2$ in \cite{Alabau-Boussouira2011a} with a function $a:(0,L)\longrightarrow\R$ such that $\displaystyle\bar a=\frac{1}{L}\int_0^La(x)dx>0$ and $\displaystyle\|a-\bar a\|_{L^2(0,L)}$ is small enough. Wehbe and Youcef \cite{Wehbe2010} inspected the situation of two locally distributed dampings acting on the last two equations; that is, \[(F_1,F_2,F_3)=(0,\, a_1(x)\psi_t,\, a_2(x)w_t),\] where $a_i:(0,L)\longrightarrow\R$ are non-negative functions which can take value zero on some part of the interval $(0,L)$. By using the frequency domain and the multiplier method, they  proved that the system is exponentially stable if and only if $s_1=s_2$. When $s_1\neq s_2$ they established a polynomial decay rate which can be improved with the regularity of the initial data. The same result was established by Alves $et \ al.$ in \cite{Alves2015}, in the case of non-equal speeds of wave propagation, they used the recent result of Borichev and Tomilov in \cite{Borichev2010} to show that the solution is polynomially stable with optimal decay rate.
\par Concerning the dissipation via heat effect, we mention the work of Liu and Rao \cite{Liu2009c} where the following system
\begin{equation}\label{e5s1}
\begin{array}{ll}
\r_1\vp_{tt}-k_1(\vp_x+\psi+lw)_x-lk_3(w_x-\vp)+l\g\chi=0& \mathrm{in\,\,\,}(0,L)\times(0,\infty),\\
\\
\r_2\psi_{tt}-k_2\psi_{xx}+k_1(\vp_x+\psi+lw)+\g\t_x=0& \mathrm{in\,\,\,}(0,L)\times(0,\infty),\\
\\
\r_1w_{tt}-k_3(w_x-\vp)_x+lk_1(\vp_x+\psi+lw)+\g\chi_t=0& \mathrm{in\,\,\,}(0,L)\times(0,\infty),\\
\\
\r_3\t_t-\t_{xx}+\g\psi_{xt}=0& \mathrm{in\,\,\,}(0,L)\times(0,\infty),\\
\\
\r_3\chi_t-\chi_{xx}+\g(w_x-l\vp)_t=0& \mathrm{in\,\,\,}(0,L)\times(0,\infty),
\end{array}
\end{equation}
with boundary and initial conditions was considered. They showed that the exponential stability of the system is equivalent to the validity of the identity \eqref{e4s1}. In the case where \eqref{e4s1} does not hold, they established a polynomial-type decay rate. Fatori and Mu{\~{n}}oz Rivera \cite{Fatori2010} obtained a similar result as in \cite{Liu2009c} for the thermoelastic Bresse system \eqref{e5s1} when the fifth equation is omitted. They also showed that the polynomial decay rate is optimal in the case of non-equal speeds of wave propagation. Filippo Dell'Oro \cite{DellOro2015} gave a detail stability analysis of the thermoelastic Bresse-Gurtin-Pipkin system of the form:
\begin{equation}\label{e6s1}
\begin{array}{ll}
\r_1\vp_{tt}-k(\vp_x+\psi+lw)_x-lk_0(w_x-\vp)=0& \mathrm{in\,\,\,}(0,L)\times(0,\infty),\\
\\
\r_2\psi_{tt}-k_2\psi_{xx}+k(\vp_x+\psi+lw)+\g\t_x=0& \mathrm{in\,\,\,}(0,L)\times(0,\infty),\\
\\
\r_1w_{tt}-k_0(w_x-\vp)_x+lk(\vp_x+\psi+lw)=0& \mathrm{in\,\,\,}(0,L)\times(0,\infty),\\
\\
\r_3\t_t-k_1\displaystyle\int_0^\infty g(s)\t_{xx}(t-s)ds+\g\psi_{xt}=0& \mathrm{in\,\,\,}(0,L)\times(0,\infty),
\end{array}
\end{equation}
where $g$ is a bounded convex integrable function on $[0,\infty)$ satisfying \[\int_0^\infty g(s)ds=1,\] and there exists a non-increasing absolutely continuous function $\mu:(0,\infty)\longrightarrow[0,\infty)$ such that
\[\mu(0)=\lim_{s\rightarrow0}\mu(s)\in(0,\infty),\qquad g(s)=\int_s^\infty\mu(\tau)d\tau,\qquad\forall \,s\in[0,\infty)\]
and
\[\mu'(s)+\nu\mu(s)\leq0 \quad\mathrm{for\ \ some\ \ }\nu>0\quad\mathrm{and\quad} a.e.\ s\in(0,\infty).\]
By introducing a new stability number of the form
\[\chi_g=\left(\frac{\r_1}{\r_3k}-\frac{1}{g(0)k_1}\right)\left(\frac{\r_1}{k}-\frac{\r_2}{b}\right)-\frac{1}{g(0)k}\frac{\r_1\g^2}{\r_3bk},\]
he proved that the semigroup generated by \eqref{e6s1} is exponentially stable if and only if
\[\chi_g=0\qquad\mathrm{and\qquad}k=k_0.\]
As a special case, he showed that his stability result gave the  stability characterization of Bresse systems with Fourier, Maxwell-Cataneo and Coleman-Gurtin thermal dissipation. The reader is referred to  \cite{Afilal2016, Gallego2017, Keddi2016, Najdi2014, Qin2014, Said-Houari2015, Said-Houari2016, Said-Houari2015a} and the references therein for more recent results on thermoelastic Bresse system.
\par There are few results that dealt with stabilization of Bresse system via infinite memory. We begin with the work of Guesmia and Kafini \cite{Guesmia2015} in 2015. They studied the following system
\begin{equation}\label{p1s1}
\begin{cases}
\r_1\vp_{tt}-k_1(\vp_x+\psi+lw)_x-lk_3(w_x-l\vp)+\displaystyle\int_0^\infty g_1(s)\vp_{xx}(x,t-s)ds=0,\\
\r_2\psi_{tt}-k_2\psi_{xx}+k_1(\vp_x+\psi+lw)+\displaystyle\int_0^\infty g_2(s)\psi_{xx}(x,t-s)ds=0,\\
\r_1w_{tt}-lk_3(w_x-l\vp)_x+lk_1(\vp_x+\psi+lw)+\displaystyle\int_0^\infty g_3(s)w_{xx}(x,t-s)ds=0,\\
\vp(0,t)=\psi(0,t)=w(0,t)=\vp(L,t)=\psi(L,t)=w(L,t)=0,\\
\vp(x,-t)=\vp_0(x,t),\ \vp_t(x,0)=\vp_1(x),\\
\psi(x,-t)=\psi_0(x,t),\ \psi_t(x,0)=\psi_1(x),\\
w(x,-t)=w_0(x,t),\ w_t(x,0)=w_1(x),
\end{cases}
\end{equation}
where $(x,t)\in(0,L)\times\R_+$, $g_i:\R_+\longrightarrow\R_+$ are differentiable non-increasing and integrable functions, and $L,\,l_i,\,\r_i,\,k_i$ are positive constants. They proved the well-posedness and the asymptotic stability of \eqref{p1s1}. Later, Guesmia and Kirane \cite{Guesmia2016} used two infinite memories to obtain the same stability result of \cite{Guesmia2015} under the following conditions on the speeds of wave propagation:
\begin{equation*}\label{e9s1}
\frac{k_1}{\r_1}=\frac{k_2}{\r_2}\quad\mathrm{in\ case\ } g_1=0,\quad \frac{k_1}{\r_1}=\frac{k_2}{\r_2}\quad\mathrm{in\ case\ } g_2=0,\quad \frac{k_1}{\r_1}=\frac{k_3}{\r_3}\quad\mathrm{in\ case\ } g_3=0.
\end{equation*}
Santos $et \ al.$ \cite{DeLimaSantos2015} discussed the Bresse system with only one infinite memory acting on the shear angle displacement equation. Precisely,  they studied problem \eqref{p1s1} with 
\[g_1=g_3=0\qquad\mathrm{and\qquad} g_2\mathrm{\quad satisfying:\quad} -\a_1 g_2(t)\leq g_2'(t)\leq -\a_2g_2(t),\quad\forall\,t\geq0,\]
for some $\a_1,\a_2>0.$ They showed that the solution of the system decays exponentially to zero if and only if \eqref{e4s1} holds, otherwise a polynomial stability of the system with an optimal decay rate of type $t^{-1/2}$ was obtained. Recently, Guesmia \cite{Guesmia2017} analysed the asymptotic stability of Bresse system with one infinite memory in the longitudinal displacement.
\par \textbf{To the best of our knowledge, there is no result in the literature that deals with the stability of Bresse system via viscoelastic damping of finite memory-type}. In this paper we will discuss the decay property of the following finite memory-type Bresse system:
\begin{equation}\label{p1}
\begin{cases}
\r_1\vp_{tt}-k_1(\vp_x+\psi+lw)_x-lk_3(w_x-l\vp) =0,&\mathrm{in}\,\,\,(0,L)\times(0,+\infty),\\
\r_2\psi_{tt}-k_2\psi_{xx}+k_1(\vp_x+\psi+lw)+\displaystyle\int_0^tg(t-s)\psi_{xx}(s)ds=0,&\mathrm{in\,\,\,}(0,L)\times(0,+\infty),\\
\r_1w_{tt}-k_3(w_x-l\vp)_x+lk_1(\vp_x+\psi+lw)=0,&\mathrm{in\,\,\,}(0,L)\times(0,+\infty),\\
\vp(0,t)=\vp(L,t)=\psi_x(0,t)=\psi_x(L,t)=w_x(0,t)=w_x(L,t)=0,&\mathrm{for\,\,\,}t\geq0,\\
\vp(x,0)=\vp_0(x),\,\,\,\,\vp_t(x,0)=\vp_1(x),&\mathrm{for}\,\,\, x\in (0,L),\\
\psi(x,0)=\psi_0(x),\,\,\,\,\psi_t(x,0)=\psi_1(x),&\mathrm{for}\,\,\, x\in (0,L),\\
w(x,0)=w_0(x),\,\,\,\,w_t(x,0)=w_1(x),&\mathrm{for}\,\,\, x\in (0,L),
\end{cases}\tag{$P$}
\end{equation}
where $l,\,\,k_1,\,\,k_2,\,\,k_3,\,\,\rho_1,\,\,\rho_2$ are positive constants, $\vp_0,\,\,\vp_1,\,\,\psi_0,\,\,\psi_1,\,\,w_0,\,\,w_1$ are given data and $g$ is a relaxation function satisfying some conditions to be specified in the next section. Our problem is motivated by the following classical Bresse system
\[\begin{array}{lll}
\r_1\vp_{tt}-S_x-lN &=0&\mathrm{in}\quad (0,L)\times(0,\infty),\\
\r_2\psi_{tt}-M_x+S &=0&\mathrm{in}\quad (0,L)\times(0,\infty),\\
\r_1w_{tt}-N_x-lS &=0&\mathrm{in}\quad (0,L)\times(0,\infty),
\end{array}\]
where $t$ and $x$ represent the time and space variables, respectively, and $N$, $S$ and $M$ denote the axial force, the shear force and the bending moment given by
\[S=k_1(\vp_x+\psi+w),\quad M=k_2\psi_x-\int_0^tg(t-s)\psi_{x}(\cdot,s)ds,\quad N=k_3(w_x-\vp).\]
 \textbf{We will prove, under a smallness condition on $l$, generalized energy decay results for the system in the case of equal and different speeds of wave propagation}. This paper is organized as follows: in Section \ref{sec2}, we state some preliminary results. In Section \ref{sec3}, we state and prove some technical lemmas. The statement and proof of our main results are given in Sections \ref{sec4} and \ref{sec5}, while in Section \ref{sec6} we present some numerical illustrations to validate our results. Through out this  work we use $c$ to represent a generic positive constant, independent of $t$ but may depend on the initial data.


\section{\small{Preliminaries}}
\label{sec2}
In this section, we introduce our assumptions, present some useful lemmas and state the existence theorem.\\
\textbf{Assumptions:} We assume that the relaxation function $g$ satisfies the following hypotheses:
\begin{itemize}
\item[(A1)] $g:[0,\infty)\longrightarrow[0,\infty)$ is a non-increasing differentiable function such that
\[g(0)>0\qquad\mathrm{and}\qquad k_2-\int_0^{+\infty}g(s)ds>0.\]
\item[(A2)] There exists a non-increasing differentiable function $\xi\!:[0,\infty)\longrightarrow(0,\infty)$ and a constant $p$, with $1\leq p<\frac{3}{2}$, such that \[g'(t)\leq-\xi(t)g^p(t),\qquad \forall t\geq0.\]
\end{itemize}
\begin{lemma}\label{l1s2}
Assume that $g$ satisfies hypotheses $(A1)$ and $(A2)$. Then,
\[\int_0^{+\infty}\xi(t)g^{1-\s}(t)dt<+\infty,\qquad\forall\,0<\s<2-p.\]
\end{lemma}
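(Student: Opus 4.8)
The plan is to exploit assumption (A2) by recognizing that its right-hand side $-\xi g^{p}$ is, up to a constant, the derivative of a suitable positive power of $g$; once the integrand $\xi g^{1-\s}$ is dominated by (minus) such a derivative, the result drops out by a single integration. Concretely, I would set
\[
\alpha = 2-p-\s,
\]
and first record the two facts that make everything work: since $0<\s<2-p$ and $1\le p<\tfrac32$, we have $\alpha\in(0,2-p)\subset(0,1)$, so $\alpha>0$, and moreover $1-\s>0$ (because $\s<2-p\le 1$). The positivity of $\alpha$ ensures $t\mapsto g^{\alpha}(t)$ is continuous and non-increasing, while $1-\s>0$ ensures the integrand $\xi g^{1-\s}$ is well behaved, even at any point where $g$ vanishes.

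The key computation is the following. At any $t$ with $g(t)>0$, differentiating gives $(g^{\alpha})'(t)=\alpha\, g^{\alpha-1}(t)\,g'(t)$; multiplying the inequality in (A2) by the nonnegative factor $\alpha\, g^{\alpha-1}(t)$ and using the exponent identity $\alpha-1+p=1-\s$, I obtain the pointwise bound
\[
\alpha\,\xi(t)\,g^{1-\s}(t)\le -\,(g^{\alpha})'(t).
\]
I would then integrate this over $[0,T]$. On the portion where $g>0$ the inequality above applies directly; on the terminal region (if any) where $g\equiv 0$—which occurs because $g$ is non-increasing and non-negative—both sides vanish identically, so the inequality is preserved. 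This yields
\[
\alpha\int_0^{T}\xi(t)g^{1-\s}(t)\,dt\le g^{\alpha}(0)-g^{\alpha}(T)\le g^{\alpha}(0)=g(0)^{\,2-p-\s},
\]
a bound independent of $T$. Letting $T\to\infty$ and using that $g(0)>0$ is a finite number (from (A1)) gives $\int_0^{\infty}\xi\, g^{1-\s}\,dt\le g(0)^{\,2-p-\s}/(2-p-\s)<+\infty$.

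The argument is short, so the only genuine obstacle is the bookkeeping at zeros of $g$: one must verify that $\alpha>0$, so that the antiderivative $g^{\alpha}$ stays continuous rather than blowing up, and that $1-\s>0$, so that the integrand itself does not blow up. Both are guaranteed precisely by the hypothesis $0<\s<2-p$ together with $p\ge 1$, which is why the stated range of $\s$ is exactly the natural one. I note that beyond $g(0)$ being finite and positive, no further part of (A1) is needed for this lemma.
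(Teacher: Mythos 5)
Your proof is correct and is essentially the paper's own argument: both recognize, via (A2), that $\xi(t)g^{1-\sigma}(t)$ is dominated by $-\tfrac{1}{2-p-\sigma}\bigl(g^{2-p-\sigma}\bigr)'(t)$ and integrate, obtaining the uniform bound $g(0)^{2-p-\sigma}/(2-p-\sigma)$. The only difference is presentational—you multiply (A2) by $\alpha g^{\alpha-1}$ while the paper factors $g^{1-\sigma}=g^{p}\,g^{1-\sigma-p}$—plus your added (and welcome) care at possible zeros of $g$, which the paper glosses over.
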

\begin{proof}
From (A1), we have \[\lim_{t\rightarrow+\infty}g(t)=0.\]
Using (A2), we have
\begin{eqnarray*}
\int_0^{+\infty}\xi(t)g^{1-\s}(t)dt&=&\int_0^{+\infty}\xi(t)g^p(t)g^{1-\s-p}(t)dt\leq-\int_0^{+\infty}g'(t)g^{1-\s-p}(t)dt\\
&=&-\left[\frac{1}{2-\s-p}g^{2-\s-p}(t)\right]_{t=0}^{t=+\infty}<+\infty,
\end{eqnarray*}
since $\s<2-p$.
\end{proof}

\par Now, integrating both sides of the second and third equations in $\eqref{p1}$ over $(0,L)$ and using the boundary conditions, we get
\begin{equation}\label{}
\frac{d^2}{dt^2}\int_0^L\psi(x,t)dx+\frac{k_1}{\r_2}\int_0^L\psi(x,t)dx+\frac{lk_1}{\r_2}\int_0^Lw(x,t)dx=0\quad\forall t\geq0
\end{equation}
and
\begin{equation}\label{}
\frac{d^2}{dt^2}\int_0^Lw(x,t)dx+\frac{l^2k_1}{\r_1}\int_0^Lw(x,t)dx+\frac{lk_1}{\r_1}\int_0^L\psi(x,t)dx=0\quad\forall t\geq0.
\end{equation}
Solving these ODEs simultaneously yields
\begin{equation}\label{}
\int_0^L\psi(x,t)dx=a_1\cos(a_0t)+a_2\sin(a_0t)+a_3t+a_4
\end{equation}
and
\begin{equation}\label{}
\int_0^Lw(x,t)dx=\frac{a_1}{l}\left(\frac{\r_2a_0^2}{k_1}-1\right)\cos(a_0t)+\frac{a_2}{l}\left(\frac{\r_2a_0^2}{k_1}-1\right)\sin(a_0t)-\frac{a_3}{l}t-\frac{a_4}{l},
\end{equation}
where
\[
\begin{cases}
a_0=\displaystyle{\sqrt{\frac{k_1}{\r_2}+\frac{l^2k_1}{\r_1}}}\\
\\
a_1=\displaystyle{\frac{k_1}{\r_2a_0^2}\int_0^L\psi_0(x)dx+\frac{lk_1}{\r_2a_0^2}\int_0^Lw_0(x)dx},\\
\\
a_2=\displaystyle{\frac{k_1}{\r_2a_0^3}\int_0^L\psi_1(x)dx+\frac{lk_1}{\r_2a_0^3}\int_0^Lw_1(x)dx},\\
\\
a_3=\displaystyle{\left(1-\frac{k_1}{\r_2a_0^2}\right)\int_0^L\psi_1(x)dx-\frac{lk_1}{\r_2a_0^2}\int_0^Lw_1(x)dx},\\
\\
a_4=\displaystyle{\left(1-\frac{k_1}{\r_2a_0^2}\right)\int_0^L\psi_0(x)dx+\frac{lk_1}{\r_2a_0^2}\int_0^Lw_0(x)dx}.
\end{cases}
\]
Therefore, we perform the following change of variables
\[
\begin{array}{l}
\displaystyle{\widetilde\psi =\psi-\frac{1}{L}\big(a_1\cos(a_0t)+a_2\sin(a_0t)+a_3t+a_4\big)}\\
\\
\displaystyle{\widetilde w= w-\frac{1}{L}\left[\frac{a_1}{l}\left(\frac{\r_2a_0^2}{k_1}-1\right)\cos(a_0t)+\frac{a_2}{l}\left(\frac{\r_2a_0^2}{k_1}-1\right)\sin(a_0t)-\frac{a_3}{l}t-\frac{a_4}{l}\right]}
\end{array}
\]
to get \[\int_0^L\widetilde\psi(x,t)dx=\int_0^L\widetilde w(x,t)dx=0, \qquad\forall\,t\geq0.\]
Furthermore, $(\vp,\,\widetilde\psi,\,\widetilde w)$ satisfies the equations and the boundary conditions in \eqref{p1} with the initial data
\[
\begin{array}{ll}
\displaystyle{\widetilde\psi_0=\psi_0-\frac{1}{L}(a_1+a_4)},&{\displaystyle\widetilde\psi_1=\psi_1-\frac{1}{L}(a_0a_2+a_3)}\\
\\
\displaystyle{\widetilde w_0=w_0-\frac{1}{L}\left[\frac{a_1}{l}\left(\frac{\r_2a_0^2}{k_1}-1\right)-\frac{a_4}{l}\right]},&\displaystyle{\widetilde w_1=w_1-\frac{1}{L}\left[\frac{a_2a_0}{l}\left(\frac{\r_2a_0^2}{k_1}-1\right)-\frac{a_3}{l}\right].}
\end{array}
\]
From now on, we work with $\widetilde\psi,\,\,\widetilde w$  and, respectively, write $\psi,\,\,w$ for convenience. We also introduce the following spaces,\[L^2_*(0,L):=\left\lbrace w\in L^2(0,L):\int_0^Lw(x)dx=0\right\rbrace,\qquad H^1_*(0,L):= H^1(0,L)\cap L^2_*(0,L),\] and \[H^2_*(0,L):= \left\lbrace w\in H^2(0,L):w_x(0)=w_x(L)=0\right\rbrace.\]
Then, Poincar\'e's inequality is applicable to the elements of $H^1_*(0,L)$, that is,
\begin{equation}\label{}
\exists\,\,c_0>0\qquad\mathrm{such\,\,\,\,that}\qquad \int_0^Lv^2dx\leq c_0\int_0^Lv_x^2dx\qquad\forall\,v\in H^1_*(0,L).
\end{equation}

For completeness, we state, without proof the global existence and regularity result which can be established by repeating the steps of the proof of the existence result in \cite{Messaoudi2016b}.
\begin{theorem}\label{t1s2}
Let $(\vp_0,\vp_1)\in H^1_0(0,L)\times L^2(0,L)$ and $(\psi_0,\psi_1),\,(w_0,w_1)\in H^1_*(0,L)\times L^2_*(0,L)$ be given. Assume that $g$ satisfies hypothesis $(A1)$. Then, the problem \eqref{p1} has a unique global (weak) solution
\[\vp\in C(\R_+;H^1_0(0,L))\cap C^1(\R_+;L^2(0,L)),\quad \psi,\,\,w\in C(\R_+;H^1_*(0,L))\cap C^1(\R_+;L^2_*(0,L)).\] Moreover, if \[(\vp_0,\vp_1)\in (H^2(0,L)\cap H^1_0(0,L))\times H^1_0(0,L)\] and \[(\psi_0,\psi_1),\,\,(w_0,w_1)\in {(H^2_*(0,L)\cap H^1_*(0,L))\times H^1_*(0,L)},\] then \[\vp\in C(\R_+;H^2(0,L)\cap H^1_0(0,L))\cap C^1(\R_+;H^1_0(0,L))\cap C^2(\R_+;L^2(0,L)),\] and \[\psi,\,\,w\in C(\R_+;H^2_*(0,L)\cap H^1_*(0,L))\cap C^1(\R_+;H^1_*(0,L))\cap C^2(\R_+;L^2(0,L)).\]
\end{theorem}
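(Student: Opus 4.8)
The plan is to use the Faedo--Galerkin method, following the scheme of \cite{Messaoudi2016b}, which is natural here because system \eqref{p1} is linear and its only non-local feature is the finite-memory convolution in the second equation. First I would fix two bases adapted to the boundary conditions: an orthonormal basis $\{\omega_j\}$ of $L^2(0,L)$ consisting of Dirichlet eigenfunctions of $-\daba_{xx}$ for the $\vp$-component, and an orthonormal basis $\{\zeta_j\}$ of $L^2_*(0,L)$ of Neumann eigenfunctions (hence of zero mean) for the $\psi$- and $w$-components, so that $H^1_*$ and $H^2_*$ are respected. Projecting \eqref{p1} onto the span $V_n$ of the first $n$ elements produces a linear system of ODEs for the coefficients, in which the memory term appears as a Volterra integral; by the Carath\'eodory/Picard theory for linear Volterra systems, a unique approximate solution $(\vp^n,\psi^n,w^n)$ exists on a maximal interval $[0,t_n)$.

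The heart of the argument is the a priori estimate, obtained from the natural energy
\begin{equation*}
E(t)=\frac12\int_0^L\Big[\r_1(\vp_t^2+w_t^2)+\r_2\psi_t^2+k_1(\vp_x+\psi+lw)^2+k_3(w_x-l\vp)^2+\Big(k_2-\int_0^tg(s)\id s\Big)\psi_x^2\Big]\id x+\frac12(g\circ\psi_x)(t),
\end{equation*}
where $(g\circ v)(t)=\int_0^tg(t-s)\int_0^L|v(t)-v(s)|^2\id x\,\id s$. Testing the projected equations with $\vp^n_t,\psi^n_t,w^n_t$, integrating by parts in $x$ (boundary terms vanish thanks to the Dirichlet condition on $\vp$ and the Neumann conditions on $\psi,w$), and using the standard convolution manipulation for the memory term yields the dissipation law
\begin{equation*}
E'(t)=\frac12(g'\circ\psi_x)(t)-\frac12 g(t)\int_0^L\psi_x^2\id x\le0 .
\end{equation*}
Hypothesis (A1) guarantees $k_2-\int_0^{\infty}g(s)\id s>0$, so the coefficient of $\psi_x^2$ stays bounded below by a positive constant; hence $E(t)\le E(0)$ furnishes uniform-in-$n$ bounds on $\vp^n_t,\psi^n_t,w^n_t$ in $L^2$ and on $\psi^n_x$ (whence on $\psi^n$ in $H^1_*$ via Poincar\'e). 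The remaining $H^1$ bounds on $\vp^n,w^n$ over any finite interval follow by integrating the velocities in time to control $\vp^n,w^n$ in $L^2$ and then extracting $\vp^n_x,w^n_x$ from the controlled elastic combinations $\vp_x+\psi+lw$ and $w_x-l\vp$. These bounds are independent of $t_n$, so the approximate solutions are global.

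I would then extract weakly-$*$ convergent subsequences and pass to the limit in the projected weak formulation. Linearity makes most terms routine under weak convergence; the only delicate point is the convolution $\int_0^tg(t-s)\psi^n_{xx}(s)\id s$, which I would handle by combining the uniform bound on $\psi^n_x$ in $L^\infty(\R_+;L^2)$ with the integrability of $g$ and the strong convergence of $\psi^n$ in $C([0,T];L^2)$ supplied by the Aubin--Lions lemma. Uniqueness is immediate from linearity: the difference of two solutions has zero data, and the energy identity forces $E\equiv0$, so the difference vanishes. For the higher-regularity statement I would run a second energy estimate on the time-differentiated system, testing with $\vp^n_{tt},\psi^n_{tt},w^n_{tt}$, using the assumed $H^2$-regularity and compatibility of the data.

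The step I expect to be the main obstacle is this last one. Differentiating the memory term in time produces a boundary-in-time contribution $g(0)\psi_x(t)$ (finite since $g(0)>0$) together with $g'\circ\psi_{xt}$, and these must be absorbed into a second-level energy whose coercivity has to be re-established at the higher order; verifying the compatibility conditions that make the time-differentiated Galerkin scheme well posed, and propagating the $L^2$-$H^2$ regularity through the convolution, is the most technical part of the argument.
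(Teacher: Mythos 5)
The paper does not actually prove this theorem: it is stated explicitly ``without proof,'' with the remark that it can be established by repeating the steps of the existence proof in \cite{Messaoudi2016b}. Your Faedo--Galerkin outline is precisely the standard argument that citation delegates to, and it is correct in its essentials: the bases adapted to the Dirichlet condition on $\vp$ and the zero-mean Neumann setting for $\psi,w$ (consistent with the paper's normalization $\int_0^L\psi\,dx=\int_0^Lw\,dx=0$, which is preserved by the dynamics), the dissipation law which is exactly Lemma \ref{l2s2}, the recovery of $\vp_x,w_x$ from the combinations $\vp_x+\psi+lw$ and $w_x-l\vp$ together with time-integration of the velocities, and uniqueness by linearity. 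One small correction to your final step: differentiating the memory term gives $g(t)\psi_{0xx}+\int_0^tg(t-s)\psi_{xxt}(s)\,ds$, or equivalently $g(0)\psi_{xx}(t)+\int_0^tg'(t-s)\psi_{xx}(s)\,ds$, not ``$g(0)\psi_x(t)$ together with $g'\circ\psi_{xt}$''; the paper performs this computation at the start of Section \ref{sec5}, and it is the first form that explains why $\psi_{0xx}$ (hence the $H^2$ hypothesis on $\psi_0$) enters the higher-order energy $E_*$, while $g'\circ\psi_{xt}$ only appears later in its dissipation identity (Lemma \ref{l5s2}). This slip does not affect the viability of your higher-regularity argument, and no compatibility conditions beyond membership in the stated spaces are needed, since the Neumann condition is built into $H^2_*$ and the system is linear.
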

Now, we introduce the energy functional
\begin{equation}\label{e1s2}
\begin{split}
E(t):= \,\,\,&\frac{1}{2}\int_0^L\left[\r_1\vp_t^2+\r_2\psi_t^2+\r_1w_t^2+\left(k_2-\int_0^tg(s)ds\right)\psi_x^2\right.\\
&+\left.\vphantom{\int_0^t}k_3(w_x-l\vp)^2+k_1(\vp_x+\psi+lw)^2\right]dx+\frac{1}{2}(g\circ\psi_x)(t),\quad\forall\,t\geq0,
\end{split}
\end{equation}
where for any $v\in L^2_{loc}([0,+\infty);L^2(0,L))$, \[(g\circ v)(t):=\int_0^L\int_0^tg(t-s)\big(v(t)-v(s)\big)^2dsdx.\]
By multiplying the equations in \eqref{p1} by $\vp_t,\,\psi_t,\,w_t$, respectively, integrating over $(0,L)$ and exploiting the boundary conditions we have the following lemma.
\begin{lemma}\label{l2s2}
Let $(\vp,\psi,w)$ be the weak solution of \eqref{p1}. Then,
\begin{equation}\label{e2s2}
E'(t)=-\frac{1}{2}g(t)\int_0^L\psi_x^2dx+\frac{1}{2}(g'\circ\psi_x)(t)\leq0,\qquad\forall t\geq0.
\end{equation}
\end{lemma}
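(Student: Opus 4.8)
The plan is to follow the standard multiplier method for viscoelastic energy identities. First I would multiply the first, second, and third equations of \eqref{p1} by $\vp_t$, $\psi_t$, and $w_t$ respectively, integrate each over $(0,L)$, and integrate by parts in $x$. The homogeneous boundary conditions $\vp(0,t)=\vp(L,t)=0$ (hence $\vp_t=0$ at the endpoints) together with $\psi_x(0,t)=\psi_x(L,t)=w_x(0,t)=w_x(L,t)=0$ guarantee that all boundary contributions vanish. The inertial terms immediately produce $\frac{d}{dt}\frac12\int_0^L(\r_1\vp_t^2+\r_2\psi_t^2+\r_1 w_t^2)\id x$.

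Next I would exploit the coupling structure. Writing $\eta:=\vp_x+\psi+lw$ and $\zeta:=w_x-l\vp$, the three $k_1$-terms combine, after integration by parts, into $k_1\int_0^L\eta(\vp_x+\psi+lw)_t\id x=\frac{d}{dt}\frac{k_1}{2}\int_0^L\eta^2\id x$, and the two $k_3$-terms collapse to $k_3\int_0^L\zeta(w_x-l\vp)_t\id x=\frac{d}{dt}\frac{k_3}{2}\int_0^L\zeta^2\id x$. These are exactly the potential-energy contributions $k_1(\vp_x+\psi+lw)^2$ and $k_3(w_x-l\vp)^2$ appearing in \eqref{e1s2}. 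The purely elastic bending term $-k_2\int_0^L\psi_{xx}\psi_t\id x$ yields $\frac{d}{dt}\frac{k_2}{2}\int_0^L\psi_x^2\id x$.

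The crux is the memory term $-\int_0^L\big(\int_0^tg(t-s)\psi_x(s)\id s\big)\psi_{tx}\id x$, obtained after one integration by parts in $x$. Here I would split $\psi_x(s)=\psi_x(t)-(\psi_x(t)-\psi_x(s))$, so that it separates into $-\big(\int_0^tg(s)\id s\big)\int_0^L\psi_x\psi_{tx}\id x$ plus $\int_0^L\psi_{tx}\int_0^tg(t-s)(\psi_x(t)-\psi_x(s))\id s\,\id x$. Combining the first piece with $\frac{d}{dt}\frac{k_2}{2}\int_0^L\psi_x^2\id x$ reconstructs $\frac{d}{dt}[\frac12(k_2-\int_0^tg(s)\id s)\int_0^L\psi_x^2\id x]$ and leaves the residual $+\frac12 g(t)\int_0^L\psi_x^2\id x$. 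For the second piece I would differentiate $(g\circ\psi_x)(t)$ under the integral sign; the boundary term at $s=t$ vanishes since the integrand carries the factor $(\psi_x(t)-\psi_x(t))^2=0$, giving $\frac{d}{dt}(g\circ\psi_x)=(g'\circ\psi_x)+2\int_0^L\psi_{tx}\int_0^tg(t-s)(\psi_x(t)-\psi_x(s))\id s\,\id x$, so the second piece equals $\frac12\frac{d}{dt}(g\circ\psi_x)-\frac12(g'\circ\psi_x)$.

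Summing the three weighted equations and collecting all total time derivatives reproduces exactly $E'(t)$ from \eqref{e1s2}, leaving the two residual terms, whence $E'(t)=-\frac12 g(t)\int_0^L\psi_x^2\id x+\frac12(g'\circ\psi_x)(t)$. The sign assertion is then immediate: $g\geq0$ by (A1), and $g'\leq0$ since $g$ is non-increasing, so $(g'\circ\psi_x)(t)\leq0$ and both residual terms are non-positive. The main obstacle is purely bookkeeping, namely the differentiation of the convolution $(g\circ\psi_x)$ and making sure the factor $-\int_0^tg(s)\id s$ coming from the memory term fuses cleanly with the $k_2$ bending term to restore the time-dependent coefficient $k_2-\int_0^tg(s)\id s$ present in the energy; no delicate estimate is required.
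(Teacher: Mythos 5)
Your proposal is correct and follows exactly the approach the paper indicates: the paper itself only states, in the sentence preceding the lemma, that the identity follows from multiplying the equations of \eqref{p1} by $\vp_t,\psi_t,w_t$, integrating over $(0,L)$, and using the boundary conditions, which is precisely the computation you carry out. Your detailed bookkeeping (the fusion of the memory term with the $k_2$ bending term to produce the coefficient $k_2-\int_0^tg(s)\,ds$, and the differentiation of $(g\circ\psi_x)$ yielding the residuals $\frac12 g(t)\int_0^L\psi_x^2\,dx$ and $-\frac12(g'\circ\psi_x)$) is accurate and fills in the steps the paper omits.
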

From the Cauchy-Schwarz and Poicar\'e's inequalities we have the following lemma.
\begin{lemma}[\cite{Messaoudi2007}]\label{l3s2}
There exists a constant $c>0$ such that for any $v\in L^2_{loc}(\R_+;H^1_*(0,L))$, we have \[\int_0^L\left(\int_0^tg(t-s)(v(t)-v(s))ds\right)^2dx\leq c(g\circ v_x)(t),\quad\forall t\geq0.\]
\end{lemma}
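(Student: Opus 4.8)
The plan is to reduce the claimed bound to Poincar\'e's inequality after controlling the time-convolution by Cauchy--Schwarz. First I would write, for fixed $x$ and $t$,
\[
\int_0^t g(t-s)\big(v(t)-v(s)\big)\,ds=\int_0^t \sqrt{g(t-s)}\,\Big(\sqrt{g(t-s)}\,\big(v(t)-v(s)\big)\Big)\,ds,
\]
and apply the Cauchy--Schwarz inequality in the variable $s$ to obtain
\[
\left(\int_0^t g(t-s)\big(v(t)-v(s)\big)\,ds\right)^2\le\left(\int_0^t g(t-s)\,ds\right)\left(\int_0^t g(t-s)\big(v(t)-v(s)\big)^2\,ds\right).
\]
Since $g\ge0$ and $g$ is integrable by (A1), the first factor is bounded uniformly in $t$ by $\int_0^{+\infty}g(s)\,ds$, which I would absorb into the generic constant.

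Next I would integrate this pointwise estimate over $x\in(0,L)$ and invoke Fubini's theorem to exchange the order of the $x$- and $s$-integrations; this is legitimate because the integrand is non-negative and the hypothesis $v\in L^2_{loc}(\R_+;H^1_*(0,L))$ guarantees the required joint measurability and finiteness. The resulting expression, up to the multiplicative constant, is
\[
\int_0^t g(t-s)\left(\int_0^L\big(v(t)-v(s)\big)^2\,dx\right)ds.
\]

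The decisive step is the spatial estimate inside the bracket. For each fixed pair $(t,s)$ the function $x\mapsto v(x,t)-v(x,s)$ lies in $H^1_*(0,L)$: since $L^2_*(0,L)$ is a linear subspace and both $v(\cdot,t)$ and $v(\cdot,s)$ have zero mean, so does their difference, and it inherits the $H^1$ regularity. Hence the Poincar\'e inequality stated in the excerpt applies and yields $\int_0^L(v(t)-v(s))^2\,dx\le c_0\int_0^L(v_x(t)-v_x(s))^2\,dx$. Substituting this and swapping the order of integration back via Fubini produces exactly $c\,(g\circ v_x)(t)$ with $c=c_0\int_0^{+\infty}g(s)\,ds$.

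I expect the only genuine subtlety to be the verification that the difference $v(t)-v(s)$ has vanishing spatial average, so that the Poincar\'e inequality for $H^1_*(0,L)$ is indeed applicable; everything else is a routine combination of Cauchy--Schwarz, the integrability of $g$, and Fubini's theorem.
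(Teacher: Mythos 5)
Your proof is correct and follows exactly the route the paper intends: the paper introduces this lemma with the remark that it follows ``from the Cauchy--Schwarz and Poincar\'e's inequalities,'' which is precisely your argument of splitting $g(t-s)=\sqrt{g(t-s)}\,\sqrt{g(t-s)}$, applying Cauchy--Schwarz in $s$, bounding $\int_0^t g(s)\,ds$ by $\int_0^{+\infty}g(s)\,ds<k_2$ via (A1), and then applying Poincar\'e's inequality on $H^1_*(0,L)$ to the zero-mean difference $v(\cdot,t)-v(\cdot,s)$. Your attention to the zero-mean property of the difference, which is what legitimizes the use of Poincar\'e's inequality, is exactly the right point to verify.
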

\begin{lemma}[\cite{Messaoudi2007}]\label{l4s2}
Assume that conditions $(A1)$ and $(A2)$ hold and let $(\vp,\psi,w)$ be the weak solution of \eqref{p1}. Then, for any $0<\s<1$, we have \[g\circ\psi_x\leq c\left[\int_0^tg^{1-\s}(s)ds\right]^{\frac{p-1}{p+\s-1}}(g^p\circ\psi_x)^{\frac{\s}{p+\s-1}}.\] For $\s=\tfrac{1}{2}$, we obtain the following inequality
\begin{equation}\label{e3s2}
g\circ\psi_x\leq c\left(\int_0^tg^{1/2}(s)ds\right)^{\frac{2p-2}{2p-1}}(g^p\circ\psi_x)^{\frac{1}{2p-1}}.
\end{equation}
\end{lemma}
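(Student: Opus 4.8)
The plan is to obtain the estimate through a single, carefully balanced application of Hölder's inequality inside the double integral defining $g\circ\psi_x$, following the technique of \cite{Messaoudi2007}. The one ingredient I need beyond the definition of $g\circ v$ is a uniform-in-$(t,s)$ bound
\[
\int_0^L\big(\psi_x(t)-\psi_x(s)\big)^2\,dx\le c,
\]
which comes for free from dissipativity: by Lemma \ref{l2s2} the energy $t\mapsto E(t)$ is non-increasing, and by $(A1)$ the coefficient $k_2-\int_0^tg$ stays bounded below by the positive constant $k_2-\int_0^\infty g$, so $\int_0^L\psi_x^2(\cdot,\tau)\,dx\le cE(\tau)\le cE(0)$ for all $\tau$; the bound then follows from $(a-b)^2\le 2a^2+2b^2$.

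I would first dispose of the case $p=1$, where both exponents degenerate and the claimed inequality reduces to $g\circ\psi_x\le c\,(g\circ\psi_x)$ and is trivial. For $1<p<\tfrac32$ I set the conjugate pair
\[
\theta=\frac{p+\sigma-1}{p-1},\qquad \theta'=\frac{p+\sigma-1}{\sigma},\qquad \frac1\theta+\frac1{\theta'}=1,
\]
and split the integrand of $g\circ\psi_x$ as a product $F\cdot G$ with
\[
F=g^{a}(t-s)\,\big(\psi_x(t)-\psi_x(s)\big)^{2\alpha},\qquad
G=g^{b}(t-s)\,\big(\psi_x(t)-\psi_x(s)\big)^{2\beta},
\]
where $a=\tfrac{(p-1)(1-\sigma)}{p+\sigma-1}$, $b=\tfrac{p\sigma}{p+\sigma-1}$, $\alpha=\tfrac1\theta$, $\beta=\tfrac1{\theta'}$. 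These are chosen precisely so that $F^\theta=g^{1-\sigma}(t-s)(\psi_x(t)-\psi_x(s))^2$ and $G^{\theta'}=g^{p}(t-s)(\psi_x(t)-\psi_x(s))^2$; a short check gives $a+b=1$ and $\alpha+\beta=1$, hence $FG=g(t-s)(\psi_x(t)-\psi_x(s))^2$ as required.

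Applying Hölder with the pair $(\theta,\theta')$ to $\int_0^L\int_0^tFG\,ds\,dx$ then yields
\[
g\circ\psi_x\le\left(\int_0^L\int_0^t g^{1-\sigma}(t-s)\big(\psi_x(t)-\psi_x(s)\big)^2\,ds\,dx\right)^{1/\theta}(g^p\circ\psi_x)^{1/\theta'}.
\]
In the first factor I would apply Fubini, bound $\int_0^L(\psi_x(t)-\psi_x(s))^2\,dx\le c$ by the uniform estimate above, and change variables $s\mapsto t-s$ to turn $\int_0^tg^{1-\sigma}(t-s)\,ds$ into $\int_0^tg^{1-\sigma}(s)\,ds$. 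Recalling that $1/\theta=\tfrac{p-1}{p+\sigma-1}$ and $1/\theta'=\tfrac{\sigma}{p+\sigma-1}$ produces the general inequality, and setting $\sigma=\tfrac12$ (so the exponents become $\tfrac{2p-2}{2p-1}$ and $\tfrac1{2p-1}$) gives the displayed special case \eqref{e3s2}.

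I expect the only genuine subtlety to be the exponent bookkeeping. The squared difference must survive with full power $2$ inside the $g^p\circ\psi_x$ factor, which forces $\alpha\theta=1$ and thereby makes the residual $x$-integral in the first factor exactly $\int_0^L(\psi_x(t)-\psi_x(s))^2\,dx$ — precisely the quantity controlled by the energy. In other words, the two competing requirements (correct homogeneity in the difference on one side, integrability of the pure-$g$ factor against $\int_0^tg^{1-\sigma}$ on the other) are simultaneously met only for this specific $\theta$, so closing the exponents is the crux, and the uniform bound on $\psi_x$ is exactly what makes the argument go through.
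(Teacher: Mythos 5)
Your proof is correct and is essentially the paper's own argument: the paper cites \cite{Messaoudi2007} for this lemma but displays exactly this technique in its proof of the analogous Lemma \ref{l6s2}, where the Hölder splitting with exponents $r=\frac{p+\s-1}{p-1}$, $\frac{r}{r-1}=\frac{p+\s-1}{\s}$ and weight $q=\frac{(p-1)(1-\s)}{p+\s-1}$ matches your $(\theta,\theta',a,b)$ bookkeeping, and the uniform bound on $\int_0^L(\psi_x(t)-\psi_x(s))^2dx$ via the non-increasing energy and $k_2-g_0>0$ is the same closing step. Your separate (trivial) treatment of $p=1$ is a minor additional care the paper leaves implicit.
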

\begin{corollary}\label{c1s2}
Assume that $g$ satisfies $(A1)$, $(A2)$ and $(\vp,\psi,w)$ is the weak solution of \eqref{p1}. Then, \[\xi(t)(g\circ\psi_x)(t)\leq c(-E'(t))^{\frac{1}{2p-1}},\quad\forall t\geq0.\]
\end{corollary}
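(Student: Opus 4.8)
The plan is to combine the differential inequality in (A2) with the energy identity of Lemma \ref{l2s2} to control the dissipation term $(g^p\circ\psi_x)(t)$ by $-E'(t)$, and then to feed this estimate into the interpolation inequality \eqref{e3s2} of Lemma \ref{l4s2} with the choice $\s=\tfrac12$. The role of the restriction $1\leq p<\tfrac32$ is precisely to make this choice admissible in Lemma \ref{l1s2}.

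First I would extract the pointwise consequence of (A2). Since $\xi$ is non-increasing and $0\leq t-s\leq t$ for $s\in(0,t)$, we have $\xi(t)\leq\xi(t-s)$, so applying (A2) at the argument $t-s$ gives $\xi(t)g^p(t-s)\leq\xi(t-s)g^p(t-s)\leq -g'(t-s)$. Multiplying by $(\psi_x(t)-\psi_x(s))^2\geq0$ and integrating over $(0,t)\times(0,L)$ yields
\[\xi(t)(g^p\circ\psi_x)(t)\leq -(g'\circ\psi_x)(t),\qquad\forall t\geq0.\]
Next, from the energy identity \eqref{e2s2} we read off $-(g'\circ\psi_x)(t)=-2E'(t)-g(t)\int_0^L\psi_x^2dx\leq -2E'(t)$, because $g(t)\geq0$ and the remaining integral is non-negative. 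Combining the two displays gives
\[(g^p\circ\psi_x)(t)\leq\frac{2}{\xi(t)}\,(-E'(t)),\qquad\forall t\geq0.\]

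Then I would invoke \eqref{e3s2}, multiply it through by $\xi(t)$, and substitute the previous bound for the factor $(g^p\circ\psi_x)^{1/(2p-1)}$. The powers of $\xi(t)$ recombine (one factor $\xi(t)$ against $\xi(t)^{-1/(2p-1)}$) into $\xi(t)^{(2p-2)/(2p-1)}$, and absorbing the numerical constant into $c$ leaves
\[\xi(t)(g\circ\psi_x)(t)\leq c\left(\xi(t)\int_0^tg^{1/2}(s)\id s\right)^{\frac{2p-2}{2p-1}}(-E'(t))^{\frac{1}{2p-1}},\qquad\forall t\geq0.\]
In the endpoint case $p=1$ the bracketed exponent vanishes and the estimate reduces directly to $\xi(t)(g\circ\psi_x)(t)\leq c(-E'(t))$, consistent with the first step, so there is no separate argument needed there.

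The remaining—and only genuinely delicate—step is to show that the quantity $\xi(t)\int_0^tg^{1/2}(s)\id s$ stays bounded uniformly in $t$, which is where the hypothesis $p<\tfrac32$ must be used. Here I would again exploit that $\xi$ is non-increasing, so $\xi(t)\leq\xi(s)$ for $s\leq t$, giving
\[\xi(t)\int_0^tg^{1/2}(s)\id s\leq\int_0^t\xi(s)g^{1/2}(s)\id s\leq\int_0^{+\infty}\xi(s)g^{1/2}(s)\id s,\]
and this last integral is finite by Lemma \ref{l1s2} applied with $\s=\tfrac12$, since $\tfrac12<2-p$ is exactly equivalent to $p<\tfrac32$. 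As the exponent $\tfrac{2p-2}{2p-1}$ is non-negative for $p\geq1$, the bracketed factor is bounded by a constant, and the corollary follows with a new generic constant $c$.
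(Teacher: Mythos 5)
Your proof is correct and takes essentially the same route as the paper's: both multiply \eqref{e3s2} (Lemma \ref{l4s2} with $\s=\tfrac12$) by $\xi(t)$, use the monotonicity of $\xi$ together with $(A2)$ to bound $\xi(t)(g^p\circ\psi_x)(t)$ by $-(g'\circ\psi_x)(t)\leq-2E'(t)$ via Lemma \ref{l2s2}, and invoke Lemma \ref{l1s2} with $\s=\tfrac12$ (this is where $p<\tfrac32$ enters) to control $\xi(t)\int_0^tg^{1/2}(s)ds$. The only differences are the order in which the same ingredients are assembled and your explicit (and correct) remark that the $p=1$ endpoint needs no separate argument.
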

\begin{proof}
Multiplying both sides of the inequality \eqref{e3s2} by $\xi(t)$ and using Lemmas \ref{l1s2} and \ref{l2s2}, we get
\begin{eqnarray*}
\xi(t)(g\circ\psi_x)(t)&\leq& c\xi^{\frac{2p-2}{2p-1}}(t)\left(\int_0^tg^{1/2}(s)ds\right)^{\frac{2p-2}{2p-1}}(\xi g^p\circ\psi_x)^{\frac{1}{2p-1}}(t)\\
&\leq&c\left(\int_0^t\xi(s)g^{1/2}(s)ds\right)^{\frac{2p-2}{2p-1}}(-g'\circ\psi_x)^{\frac{1}{2p-1}}\leq c(-E'(t))^{\frac{1}{2p-1}}.
\end{eqnarray*}
\end{proof}
\begin{lemma}[Jensen's inequality]\label{l8s2}
Let $G:[a,b]\longrightarrow\R$ be a concave function. Assume that the functions $f:\O\longrightarrow[a,b]$ and $h:\O\longrightarrow\R$ are integrable such that $h(x)\geq0$, for any $x\in\O$ and $\displaystyle\int_\O h(x)dx=k>0$. Then,
\[\frac{1}{k}\int_\O G(f(x))h(x)dx\leq G\left(\frac{1}{k}\int_\O f(x)h(x)dx\right).\] In particular, for $\displaystyle G(y)=y^{\frac{1}{p}},\,\,y\geq0,\,\,p>1$, we have \[\frac{1}{k}\int_\O f^{1/p}(x)h(x)dx\leq\left(\frac{1}{k}\int_\O f(x)h(x)dx\right)^{1/p}.\]
\end{lemma}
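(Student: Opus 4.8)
The plan is to reduce both inequalities to the standard probabilistic form of Jensen's inequality and then exploit the supporting-line characterization of concave functions. The normalization is the natural first move: since $h\geq0$ and $\int_\O h\,dx=k>0$, the function $x\mapsto h(x)/k$ is a probability density on $\O$. Writing $d\mu=\frac{1}{k}h(x)\,dx$, we have $\mu(\O)=1$, and the assertion becomes the compact statement $\int_\O G(f)\,d\mu\leq G\!\left(\int_\O f\,d\mu\right)$. I would then set $y_0:=\int_\O f\,d\mu=\frac{1}{k}\int_\O f(x)h(x)\,dx$ and observe that, since $f$ takes values in $[a,b]$ and $\mu$ is a probability measure, $y_0\in[a,b]$; this $y_0$ is precisely the argument appearing on the right-hand side.

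The key step is that a concave $G$ lies below each of its supporting lines. Assuming $y_0\in(a,b)$, concavity supplies a slope $m$ (any element of the superdifferential at $y_0$, i.e. $m\in[G'_+(y_0),G'_-(y_0)]$) for which $G(y)\leq G(y_0)+m(y-y_0)$ holds for every $y\in[a,b]$. Substituting $y=f(x)$ and integrating against $d\mu$, the affine term contributes $m\bigl(\int_\O f\,d\mu-y_0\bigr)=0$, so we are left with $\int_\O G(f)\,d\mu\leq G(y_0)$, which is exactly the claimed inequality after multiplying through by the normalization. The two boundary cases are degenerate: if $y_0=a$, then $\int_\O(f-a)\,d\mu=0$ with $f-a\geq0$ forces $f=a$ $\mu$-almost everywhere, so both sides reduce to $G(a)$ and equality holds; the case $y_0=b$ is symmetric.

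For the stated special case it then suffices to verify that $G(y)=y^{1/p}$ is concave on $[0,\infty)$ when $p>1$, which is immediate from $G''(y)=\frac{1-p}{p^2}\,y^{1/p-2}<0$ on $(0,\infty)$. Applying the general inequality with this $G$ (and with $f$ now valued in $[0,\infty)$) yields $\frac{1}{k}\int_\O f^{1/p}(x)h(x)\,dx\leq\bigl(\frac{1}{k}\int_\O f(x)h(x)\,dx\bigr)^{1/p}$, as required.

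I expect the only genuine obstacle to be the analytic justification of the supporting line together with the endpoint bookkeeping: one must invoke that a finite concave function on an interval has a non-empty superdifferential at each interior point (equivalently, one-sided derivatives exist with $G'_+\leq G'_-$), and must separately dispose of the degenerate cases $y_0\in\{a,b\}$. The remaining technical point, measurability of $G\circ f$, is automatic since $G$ is continuous on $(a,b)$, so no further care is needed there.
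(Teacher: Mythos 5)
Your proof is correct, but there is nothing in the paper to compare it against: the paper states this lemma (Jensen's inequality) as a classical preliminary fact, without any proof, and immediately puts it to use in the decay estimates of Sections 4 and 5. What you have supplied is the standard textbook argument — normalize $h/k$ to a probability measure $\mu$, place $y_0=\int_\Omega f\,d\mu$ in $[a,b]$, invoke the non-empty superdifferential of a finite concave function at an interior point to get a supporting line $G(y)\leq G(y_0)+m(y-y_0)$, integrate so the affine term vanishes, and dispose of the degenerate endpoint cases $y_0\in\{a,b\}$ by noting $f$ is then $\mu$-a.e.\ constant. Your treatment is complete and careful, including the endpoint bookkeeping and the concavity check $G''(y)=\frac{1-p}{p^2}y^{1/p-2}<0$ for the special case $G(y)=y^{1/p}$. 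One small point worth making explicit if this were to be included: the special case is applied on $[0,\infty)$ rather than a compact $[a,b]$, so you should note that the supporting-line argument is unaffected by unboundedness of the interval once $\int_\Omega f\,d\mu<\infty$ (guaranteed by the integrability hypotheses), and that a concave $G$ can jump down at an endpoint without breaking the supporting-line inequality there, since $G(a)\leq\liminf_{y\to a^+}G(y)$. Neither issue is a gap in substance; your proposal proves, correctly and by the expected route, a fact the authors simply took for granted.
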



\section{\small{Technical Lemmas}}
\label{sec3}
In this section, we state and prove some lemmas needed to establish our main results. All the computations are done for regular solutions but they still hold for weak and strong solutions by a density argument.
\begin{lemma}\label{l1s3}
Assume that conditions $(A1)$ and $(A2)$ hold. Then, the functional $I_1$ defined by
\[I_1(t):=-\r_2\int_0^L\psi_t\int_0^tg(t-s)(\psi(t)-\psi(s))dsdx\]
satisfies, along the solution of \eqref{p1}, the estimates
\begin{eqnarray}\label{e1s3}
I_1'(t)&\leq&-\r_2\left(\int_0^tg(s)ds-\d\right)\int_0^L\psi_t^2dx+\d \int_0^L(\vp_x+\psi+lw)^2dx\nonumber\\
&&+c\d\int_0^L\psi_x^2dx+\frac{c}{\d}(g\circ\psi_x-g'\circ\psi_x),\qquad\forall\,\d>0.
\end{eqnarray}
\end{lemma}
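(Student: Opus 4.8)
The plan is to differentiate $I_1$ along the solution and then dispatch each resulting term with Young's inequality and Lemma~\ref{l3s2}. Writing $\eta(x,t):=\int_0^tg(t-s)\big(\psi(t)-\psi(s)\big)\,ds$ so that $I_1(t)=-\r_2\int_0^L\psi_t\,\eta\,dx$, the product rule gives $I_1'(t)=-\r_2\int_0^L\psi_{tt}\,\eta\,dx-\r_2\int_0^L\psi_t\,\eta_t\,dx$. First I would compute $\eta_t$ by the Leibniz rule; the boundary contribution at $s=t$ vanishes since $\psi(t)-\psi(t)=0$, leaving
\[
\eta_t=\left(\int_0^tg(s)\,ds\right)\psi_t+\int_0^tg'(t-s)\big(\psi(t)-\psi(s)\big)\,ds .
\]
For the $\psi_{tt}$ term I would substitute $\r_2\psi_{tt}$ from the second equation of \eqref{p1} and rewrite the memory convolution in the differenced form
\[
\int_0^tg(t-s)\psi_{xx}(s)\,ds=\left(\int_0^tg(s)\,ds\right)\psi_{xx}-\int_0^tg(t-s)\big(\psi_{xx}(t)-\psi_{xx}(s)\big)\,ds ,
\]
which isolates the reduced coefficient $k_2-\int_0^tg(s)\,ds$ in front of $\psi_{xx}$ and produces the correction $\eta_{xx}:=\int_0^tg(t-s)(\psi_{xx}(t)-\psi_{xx}(s))\,ds$.

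Next I would integrate by parts in $x$, where the Neumann conditions do the essential work: since $\psi_x(0,t)=\psi_x(L,t)=0$ one also has $\eta_x(0,t)=\eta_x(L,t)=0$, so every boundary term vanishes and each second derivative is traded onto $\eta_x$ or $\psi_x$. Collecting the pieces I expect an identity for $I_1'(t)$ consisting of five terms: $(k_2-\int_0^tg)\int_0^L\psi_x\eta_x\,dx$, the coupling term $k_1\int_0^L(\vp_x+\psi+lw)\eta\,dx$, the quadratic term $\int_0^L\eta_x^2\,dx$, the term $-\r_2\int_0^L\psi_t\int_0^tg'(t-s)(\psi(t)-\psi(s))\,ds\,dx$ coming from $\eta_t$, and finally $-\r_2\big(\int_0^tg(s)\,ds\big)\int_0^L\psi_t^2\,dx$, which is already the leading dissipative term appearing in the claim.

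It then remains to estimate the first four terms. By Cauchy--Schwarz in $s$ with weight $g$ followed by Poincar\'e (i.e.\ Lemma~\ref{l3s2}), both $\int_0^L\eta^2\,dx$ and $\int_0^L\eta_x^2\,dx$ are bounded by $c\,(g\circ\psi_x)$; applying Young's inequality to the first term (with parameter $\d$) yields $c\d\int_0^L\psi_x^2\,dx+\frac{c}{\d}(g\circ\psi_x)$, to the coupling term yields $\d\int_0^L(\vp_x+\psi+lw)^2\,dx+\frac{c}{\d}(g\circ\psi_x)$, and the quadratic term contributes directly $c\,(g\circ\psi_x)$. The main obstacle is the $g'$-term: here I would first bound $\int_0^L\big(\int_0^tg'(t-s)(\psi(t)-\psi(s))\,ds\big)^2dx$ by Cauchy--Schwarz using the weight $-g'$ (so that $\int_0^t(-g')\,ds\le g(0)$) and then Poincar\'e, arriving at $-c\,(g'\circ\psi_x)$; a Young step with coefficient chosen to give $\r_2\d\int_0^L\psi_t^2\,dx$ then lets this $\psi_t^2$ contribution merge with the leading term into $-\r_2\big(\int_0^tg(s)\,ds-\d\big)\int_0^L\psi_t^2\,dx$, while the remainder lands in $\frac{c}{\d}(-g'\circ\psi_x)$. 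Absorbing the various $(g\circ\psi_x)$ contributions into $\frac{c}{\d}(g\circ\psi_x)$ gives exactly \eqref{e1s3}.
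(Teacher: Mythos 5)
Your proposal is correct and follows essentially the same route as the paper: differentiate $I_1$, substitute the second equation of \eqref{p1}, integrate by parts using the Neumann conditions, and estimate each term via Young's inequality together with Cauchy--Schwarz/Lemma~\ref{l3s2} (applied to $-g'$ for the troublesome term). The only cosmetic difference is that you rewrite the convolution in differenced form before integrating by parts, so your identity carries $\left(k_2-\int_0^tg\right)\int_0^L\psi_x\eta_x\,dx+\int_0^L\eta_x^2\,dx$ where the paper keeps $k_2\int_0^L\psi_x\eta_x\,dx$ plus the cross term $-\int_0^L\left(\int_0^tg(t-s)\psi_x(s)\,ds\right)\eta_x\,dx$; these groupings are algebraically equivalent and lead to the same estimate.
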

\begin{proof}
Differentiating $I_1$, using equations in \eqref{p1} and integrating by parts, we get
\begin{eqnarray*}
I_1'(t)&=&-\r_2\int_0^L\psi_t\int_0^tg'(t-s)(\psi(t)-\psi(s))dsdx-\r_2\left(\int_0^tg(s)ds\right)\int_0^L\psi_t^2dx\\
&&+k_2\int_0^L\psi_x\int_0^tg(t-s)(\psi_x(t)-\psi_x(s))dsdx\\
&&+k_1\int_0^L(\vp_x+\psi+lw)\int_0^tg(t-s)(\psi(t)-\psi(s))dsdx\\
&&-\int_0^L\left(\int_0^tg(t-s)\psi_x(s)ds\right)\left(\int_0^tg(t-s)(\psi_x(t)-\psi_x(s))ds\right)dx.
\end{eqnarray*}
Next, we estimate the terms on the right-hand side of the above equation.
\par Using Young's inequality and Lemma \ref{l3s2} for $(-g')$, we obtain, for any $\d>0$,
\[-\r_2\int_0^L\psi_t\int_0^tg'(t-s)(\psi(t)-\psi(s))dsdx\leq \d\r_2\int_0^L\psi_t^2dx-\frac{c}{\d}(g'\circ\psi_x).\]
Similarly, we have
\[k_2\int_0^L\psi_x\int_0^tg(t-s)(\psi_x(t)-\psi_x(s))dsdx\leq \d\int_0^L\psi_x^2+\frac{c}{\d}(g\circ\psi_x),\]
\[k_1\int_0^L(\vp_x+\psi+lw)\int_0^tg(t-s)(\psi(t)-\psi(s))dsdx\leq  k_1\d\int_0^L(\vp_x+\psi+lw)^2dx+\frac{c}{\d}(g\circ\psi_x),\]
and
\begin{equation*}
\begin{split}
-\int_0^L\left(\int_0^tg(t-s)\psi_x(s)ds\right)\left(\int_0^tg(t-s)(\psi_x(t)-\psi_x(s))ds\right)dx
\leq& c\d\int_0^L\psi_x^2dx\\&+c\left(\d+\frac{1}{\d}\right)(g\circ\psi_x).
\end{split}
\end{equation*}
A combination of these estimates gives the desired result.
\end{proof}

\begin{lemma}\label{l2s3}
Assume that the hypotheses $(A1)$ and $(A2)$ hold. Then, for any $\ve_0,\,\d_1>0$, the functional $I_2$ defined by
\[I_2(t):=-\r_1k_3\int_0^L(w_x-l\varphi)\int_0^xw_t(y,t)dydx-\r_1k_1\int_0^L\varphi_t\int_0^x(\varphi_x+\psi+lw)(y,t)dydx\]
satisfies, along the solution of \eqref{p1}, the estimate
\begin{eqnarray}\label{e2s3}
I_2'(t)&\leq&k_1^2\int_0^L(\varphi_x+\psi+lw)^2dx-k_3^2\int_0^L(w_x-l\varphi)^2dx+\frac{c}{\ve_0}\int_0^L\psi_t^2dx\nonumber\\
&&+\left(\ve_0-\r_1k_1+\frac{l\r_1|k_3-k_1|\d_1}{2}\right)\int_0^L\varphi_t^2dx\\
&&+\r_1\left(k_3+\frac{c_0l|k_3-k_1|}{2\d_1}\right)\int_0^Lw_t^2dx.\nonumber
\end{eqnarray}
\end{lemma}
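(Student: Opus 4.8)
The plan is to treat $I_2$ by the classical multiplier method: differentiate in $t$, then eliminate the second-order time derivatives with the first and third equations of \eqref{p1}. Writing $S=\vp_x+\psi+lw$ and $V=w_x-l\vp$ for brevity, the product rule splits $I_2'$ into four integrals. The two second time derivatives that appear, $w_{tt}$ in the first product and $\vp_{tt}$ in the second, I would replace using $\r_1w_{tt}=k_3V_x-lk_1S$ and $\r_1\vp_{tt}=k_1S_x+lk_3V$, while the first time derivatives are expanded as $V_t=(w_t)_x-l\vp_t$ and $S_t=(\vp_t)_x+\psi_t+lw_t$.

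The central step is a sequence of integrations by parts in $x$ applied to every term carrying an inner primitive $\int_0^x(\cdot)\,dy$. Here the boundary and normalisation data are decisive: $\vp(0,t)=\vp(L,t)=0$ and $w_x(0,t)=w_x(L,t)=0$ give $V(0,t)=V(L,t)=0$ and $\vp_t(0,t)=0$, while the zero-mean reduction carried out in Section~\ref{sec2} yields $\int_0^Lw_t\,dx=\frac{d}{dt}\int_0^Lw\,dx=0$ and $\int_0^LS\,dx=0$. Hence each inner primitive vanishes at both $x=0$ and $x=L$, so every boundary contribution $\big[\,\cdot\,\int_0^x(\cdot)\,dy\,\big]_0^L$ disappears. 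After these integrations by parts the two coupling terms of the form $lk_1k_3\int_0^LV\big(\int_0^xS\,dy\big)\,dx$, one coming from the first equation and one from the third, occur with opposite signs and cancel exactly, leaving the clean main terms $k_1^2\int_0^LS^2\,dx$, $-k_3^2\int_0^LV^2\,dx$, $-\r_1k_1\int_0^L\vp_t^2\,dx$ and $\r_1k_3\int_0^Lw_t^2\,dx$.

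Two residual cross terms remain: $-\r_1k_1\int_0^L\vp_t\big(\int_0^x\psi_t\,dy\big)\,dx$ and $\r_1l(k_3-k_1)\int_0^L\vp_t\big(\int_0^xw_t\,dy\big)\,dx$. The first I would bound by Young's inequality, keeping $\ve_0\int_0^L\vp_t^2\,dx$ and, after Cauchy--Schwarz on $\int_0^x\psi_t\,dy$, producing $\frac{c}{\ve_0}\int_0^L\psi_t^2\,dx$. The second I would split by Young's inequality with parameter $\d_1$, giving $\frac{l\r_1|k_3-k_1|\d_1}{2}\int_0^L\vp_t^2\,dx$ together with a multiple of $\int_0^L\big(\int_0^xw_t\,dy\big)^2\,dx$; since $x\mapsto\int_0^xw_t\,dy$ vanishes at both endpoints it belongs to $H^1_*(0,L)$, so Poincar\'e's inequality with constant $c_0$ bounds this last integral by a multiple of $\int_0^Lw_t^2\,dx$, contributing $\frac{c_0l\r_1|k_3-k_1|}{2\d_1}\int_0^Lw_t^2\,dx$. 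Collecting the coefficients of $\int_0^L\vp_t^2\,dx$ and of $\int_0^Lw_t^2\,dx$ then reproduces \eqref{e2s3} exactly.

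I expect the main obstacle to be bookkeeping rather than a single sharp estimate: one must track the several boundary terms and check that the Dirichlet and zero-mean conditions annihilate each of them, and one must notice the precise cancellation of the $V\big(\int_0^xS\big)$ coupling terms, which is exactly what keeps the final inequality free of any mixed product of $S$ and $V$ and pins down the stated coefficients.
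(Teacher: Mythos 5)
Your proposal is correct and follows essentially the same route as the paper: differentiate $I_2$, substitute the first and third equations of \eqref{p1}, integrate by parts using the boundary conditions and the zero-mean normalisation to kill the boundary terms (the paper's terse proof records exactly the identity you derive, including the cancellation of the $lk_1k_3\int_0^L(w_x-l\vp)\int_0^x(\vp_x+\psi+lw)\,dy\,dx$ couplings), and then apply Young's and Poincar\'e's inequalities to the two residual cross terms, yielding precisely the stated coefficients. One cosmetic slip: vanishing at both endpoints places $x\mapsto\int_0^xw_t\,dy$ in $H^1_0(0,L)$ rather than $H^1_*(0,L)$, but Poincar\'e's inequality holds for such functions anyway, so the argument is unaffected.
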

\begin{proof}
Differentiation of $I_2$, using equations in \eqref{p1} and integration by parts yield
\begin{eqnarray*}
I_2&=&\r_1k_3\int_0^Lw_t^2dx+l\r_1k_3\int_0^L\vp_t\int_0^xw_t(y,t)dydx-k_3^2\int_0^L(w_x-l\vp)^2dx\\
&&+k_1^2\int_0^L(\vp_x+\psi+lw)^2dx-\r_1k_1\int_0^L\vp_t^2dx-\r_1k_1\int_0^L\vp_t\int_0^x(\psi_t+lw_t)(y,t)dydx.
\end{eqnarray*}
Using Young's inequality, we get, for any $\ve_0,\,\d_1>0$,
\begin{eqnarray*}
I_2&\leq&k_1^2\int_0^L(\varphi_x+\psi+lw)^2dx-k_3^2\int_0^L(w_x-l\varphi)^2dx+\frac{c}{\ve_0}\int_0^L\psi_t^2dx\\
&&+\left(\ve_0-\r_1k_1+\frac{l\r_1|k_3-k_1|\d_1}{2}\right)\int_0^L\varphi_t^2dx+\r_1\left(k_3+\frac{c_0l|k_3-k_1|}{2\d_1}\right)\int_0^Lw_t^2dx.\\
\end{eqnarray*}
\end{proof}

\begin{lemma}\label{l3s3}
Under the conditions $(A1)$ and $(A2)$, the functional $I_3$ defined by 
\[I_3(t):=-\r_1\int_0^L(\vp_x+\psi+lw)w_tdx-\frac{k_3\r_1}{k_1}\int_0^L(w_x-l\vp)\vp_tdx\]
satisfies, along the solution of \eqref{p1} and for any $\ve_0>0$, the estimate
\begin{eqnarray}\label{e3s3}
I_3'(t)&\leq&lk_1\int_0^L(\varphi_x+\psi+lw)^2dx-\frac{lk_3^2}{k_1}\int_0^L(w_x-l\varphi)^2dx+\frac{c}{\ve_0}\int_0^L\psi_t^2dx\nonumber\\
&&+\frac{l\r_1k_3}{k_1}\int_0^L\varphi_t^2dx+(\ve_0-l\r_1)\int_0^Lw_t^2dx+\r_1\left(\frac{k_3}{k_1}-1\right)\int_0^L\vp_{xt}w_tdx.
\end{eqnarray}
\end{lemma}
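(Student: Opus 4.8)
The plan is to differentiate $I_3$ directly, substitute the first and third equations of \eqref{p1}, and then simplify by integration by parts using the boundary conditions. To keep the bookkeeping clean I would first abbreviate $S:=\vp_x+\psi+lw$ and $N:=w_x-l\vp$, so that the first and third equations read $\r_1\vp_{tt}=k_1S_x+lk_3N$ and $\r_1w_{tt}=k_3N_x-lk_1S$. Differentiating the two integrals defining $I_3$ and replacing $\r_1w_{tt}$ and $\r_1\vp_{tt}$ by these expressions produces, among other contributions, the two cross integrals $-k_3\int_0^LSN_x\,dx$ and $-k_3\int_0^LNS_x\,dx$, as well as three mixed integrals coming from $S_t=\vp_{xt}+\psi_t+lw_t$ and $N_t=w_{xt}-l\vp_t$.

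The first key step is to combine the two cross integrals: their sum is $-k_3\int_0^L(SN)_x\,dx=-k_3\big[SN\big]_0^L$, which vanishes because the conditions $\vp(0,t)=\vp(L,t)=0$ and $w_x(0,t)=w_x(L,t)=0$ force $N=w_x-l\vp$ to vanish at both endpoints. After this cancellation the surviving ``diagonal'' terms are precisely $lk_1\int_0^LS^2\,dx$, $-\tfrac{lk_3^2}{k_1}\int_0^LN^2\,dx$, $\tfrac{l\r_1k_3}{k_1}\int_0^L\vp_t^2\,dx$ and $-l\r_1\int_0^Lw_t^2\,dx$, together with the three mixed integrals $-\r_1\int_0^L\vp_{xt}w_t\,dx$, $-\r_1\int_0^L\psi_tw_t\,dx$ and $-\tfrac{k_3\r_1}{k_1}\int_0^Lw_{xt}\vp_t\,dx$.

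The second key step is to rewrite the last mixed integral. Integrating by parts and using $\vp_t(0,t)=\vp_t(L,t)=0$ (which follow from the Dirichlet conditions on $\vp$), I obtain $-\tfrac{k_3\r_1}{k_1}\int_0^Lw_{xt}\vp_t\,dx=\tfrac{k_3\r_1}{k_1}\int_0^L\vp_{xt}w_t\,dx$. Adding this to $-\r_1\int_0^L\vp_{xt}w_t\,dx$ yields exactly the term $\r_1\big(\tfrac{k_3}{k_1}-1\big)\int_0^L\vp_{xt}w_t\,dx$ that appears in \eqref{e3s3}. Finally, Young's inequality applied to the remaining product gives, for any $\ve_0>0$, the bound $-\r_1\int_0^L\psi_tw_t\,dx\leq\tfrac{c}{\ve_0}\int_0^L\psi_t^2\,dx+\ve_0\int_0^Lw_t^2\,dx$, and absorbing the $\ve_0\int_0^Lw_t^2\,dx$ contribution into $-l\r_1\int_0^Lw_t^2\,dx$ produces the coefficient $(\ve_0-l\r_1)$; collecting everything then gives \eqref{e3s3}.

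I do not expect a genuine obstacle here, since the argument is a direct energy-type computation; the two points requiring care are the vanishing of the boundary term $[SN]_0^L$, which relies on pairing the mixed (Dirichlet on $\vp$, Neumann on $w$) boundary conditions correctly, and the integration by parts that converts $w_{xt}\vp_t$ into $\vp_{xt}w_t$. I would also stress that the mixed term $\r_1\big(\tfrac{k_3}{k_1}-1\big)\int_0^L\vp_{xt}w_t\,dx$ is deliberately left unestimated: it vanishes in the equal-speed case $k_1=k_3$ and must be dealt with separately in the non-equal-speed analysis.
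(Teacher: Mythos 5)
Your proposal is correct and follows essentially the same route as the paper: differentiate $I_3$, substitute the first and third equations of \eqref{p1}, integrate by parts (the paper's one-line computation implicitly contains both the cancellation of the boundary term $\left[SN\right]_0^L$ and the conversion of $-\frac{k_3\r_1}{k_1}\int_0^Lw_{xt}\vp_t\,dx$ into $\frac{k_3\r_1}{k_1}\int_0^L\vp_{xt}w_t\,dx$ that you spell out), and conclude with Young's inequality applied to the single term $-\r_1\int_0^L\psi_tw_t\,dx$. Your intermediate identity matches the paper's displayed expression exactly, and your closing remark about keeping $\r_1\bigl(\tfrac{k_3}{k_1}-1\bigr)\int_0^L\vp_{xt}w_t\,dx$ unestimated is precisely how the paper uses this lemma later.
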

\begin{proof}
Differentiating $I_3$, using equations in \eqref{p1} and integrating by parts, we have
\begin{eqnarray*}
I_3&=&-\r_1\int_0^L\psi_tw_tdx-l\r_1\int_0^Lw^2_tdx+lk_1\int_0^L(\vp_x+\psi+lw)^2dx\\
&&+\frac{l\r_1k_3}{k_1}\int_0^L\vp_t^2dx-\frac{lk_3^2}{k_1}\int_0^L(w_x-l\vp)^2dx+\r_1\left(\frac{k_3}{k_1}-1\right)\int_0^L\vp_{xt}w_tdx.
\end{eqnarray*}
Use of Young's inequality for the first term in the right-hand side gives \eqref{e3s3}.
\end{proof}

\begin{lemma}\label{l4s3}
Assume that  conditions $(A1)$ and $(A2)$ hold. Then for any $\d>0$, the  functional $I_4$ defined by 
\[I_4(t):=-\int_0^L(\r_1\vp\vp_t+\r_2\psi\psi_t+\r_1ww_t)dx\]
satisfies, along the solution of \eqref{p1}, the estimate
\begin{eqnarray}\label{e4s3}
I_4'(t)&\leq&-\int_0^L(\r_1\vp_t^2+\r_2\psi_t^2+\r_1w_t^2)dx+k_1\int_0^L(\vp_x+\psi+lw)^2dx\nonumber\\
&&+k_3\int_0^L(w_x-l\vp)^2dx+\left(k_2+\d-\int_0^tg(s)ds\right)\int_0^L\psi_x^2dx+\frac{c}{\d}(g\circ\psi_x).
\end{eqnarray}
\end{lemma}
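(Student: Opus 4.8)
The plan is to differentiate $I_4$ term by term, substitute the three equations of \eqref{p1} for $\r_1\vp_{tt}$, $\r_2\psi_{tt}$ and $\r_1 w_{tt}$, and integrate by parts in $x$ to move every second spatial derivative back onto $\vp$, $\psi$, $w$. Differentiation gives
\[I_4'(t)=-\int_0^L(\r_1\vp_t^2+\r_2\psi_t^2+\r_1 w_t^2)\,dx-\int_0^L\big(\r_1\vp\,\vp_{tt}+\r_2\psi\,\psi_{tt}+\r_1 w\,w_{tt}\big)\,dx,\]
where the first integral already produces the desired negative kinetic term. Substituting the PDE into the second integral and integrating by parts, the boundary conditions $\vp(0)=\vp(L)=0$, $\psi_x(0)=\psi_x(L)=0$, $w_x(0)=w_x(L)=0$ eliminate all boundary contributions (the endpoint term $w(w_x-l\vp)$ vanishes because $w_x$ and $\vp$ both vanish there).

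The decisive structural point is that the remaining bulk terms recombine into the two perfect squares of the energy. Collecting the contributions proportional to $(\vp_x+\psi+lw)$ --- a factor $k_1\vp_x$ from the first equation, $k_1\psi$ from the second and $lk_1 w$ from the third --- reconstitutes $k_1\int_0^L(\vp_x+\psi+lw)^2dx$, while $-lk_3\vp(w_x-l\vp)$ from the first equation and $k_3 w_x(w_x-l\vp)$ from the third assemble into $k_3\int_0^L(w_x-l\vp)^2dx$. No estimation is needed here, only the algebraic identity $w_x(w_x-l\vp)-l\vp(w_x-l\vp)=(w_x-l\vp)^2$ and its analogue for the first square.

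The only genuinely delicate piece is the viscoelastic term arising from $\psi\cdot\int_0^t g(t-s)\psi_{xx}(s)\,ds$. Integrating by parts in $x$ (again killing the boundary term via $\psi_x=0$ at the endpoints) turns it into $-\int_0^L\psi_x\int_0^t g(t-s)\psi_x(s)\,ds\,dx$, and the add-and-subtract splitting $\psi_x(s)=\psi_x(t)-(\psi_x(t)-\psi_x(s))$ separates a clean leading part $-\big(\int_0^t g(s)ds\big)\int_0^L\psi_x^2dx$ from a remainder. The leading part combines with the $k_2\int_0^L\psi_x^2dx$ coming from $k_2\psi_{xx}$ to produce the coefficient $k_2-\int_0^t g(s)ds$. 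For the remainder I would apply Young's inequality to extract a controllable $\d\int_0^L\psi_x^2dx$ and then Cauchy--Schwarz in the memory convolution, using $\int_0^t g(s)\,ds\leq\int_0^\infty g(s)\,ds<\infty$ from (A1), to bound what is left by $\tfrac{c}{\d}(g\circ\psi_x)$. Adding the $\d$ to the $\psi_x^2$ coefficient yields $k_2+\d-\int_0^t g(s)ds$ together with the term $\tfrac{c}{\d}(g\circ\psi_x)$, which is exactly \eqref{e4s3}. I expect the sign bookkeeping in the integration-by-parts step --- confirming each boundary term really vanishes and that the memory remainder carries the right sign before it is estimated --- to be the main place where care is required, the rest being routine.
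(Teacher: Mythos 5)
Your proposal is correct and follows essentially the same route as the paper: differentiate $I_4$, substitute the equations of \eqref{p1}, integrate by parts (all boundary terms vanishing by the Dirichlet--Neumann--Neumann conditions), recombine the bulk terms into the two squares $k_1\int_0^L(\vp_x+\psi+lw)^2dx$ and $k_3\int_0^L(w_x-l\vp)^2dx$, and split the memory convolution via $\psi_x(s)=\psi_x(t)-(\psi_x(t)-\psi_x(s))$ before estimating the remainder with Young's inequality and Lemma \ref{l3s2}. In fact, you supply details (the vanishing of the endpoint term $w(w_x-l\vp)$, the Young/Cauchy--Schwarz step) that the paper compresses into ``repeating the above computations.''
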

\begin{proof}
Differentiation of  $I_4$, using equations of \eqref{p1} gives
\begin{eqnarray*}
I_4'(t)&=&-\int_0^L(\r_1\vp_t^2+\r_2\psi_t^2+\r_1w_t^2)dx+k_1\int_0^L(\vp_x+\psi+lw)^2dx+k_3\int_0^L(w_x-l\vp)^2dx\\
&&+\left(k_2-\int_0^tg(s)ds\right)\int_0^L\psi_x^2dx-\int_0^L\psi_x\int_0^tg(t-s)(\psi_x(t)-\psi_x(s))dsdx.
\end{eqnarray*}
Repeating the above computations yields the desired result.
\end{proof}

\begin{lemma}\label{l5s3}
Assume that conditions $(A1)$ and $(A2)$ hold. Then for any $\d,\,\d_2>0$, the functional $I_5$ defined by
\[I_5(t):=-\r_2\int_0^L\psi_x\int_0^x\psi_t(y,t)dydx\]
satisfies, along the solution of \eqref{p1}, the estimate
\begin{eqnarray}\label{e5s3}
I_5'(t)&\leq&\r_2\int_0^L\psi_t^2dx+\left(\frac{k_1}{2\d_2}+\int_0^tg(s)ds+\d-k_2\right)\int_0^L\psi_x^2dx\nonumber\\
&&+\frac{c_0k_1\d_2}{2}\int_0^L(\vp_x+\psi+lw)^2dx+\frac{c}{\d}(g\circ\psi_x).
\end{eqnarray}
\end{lemma}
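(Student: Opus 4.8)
The plan is to differentiate $I_5$, substitute the second equation of \eqref{p1} for the resulting $\psi_{tt}$, and then control the pieces by Young's inequality together with Lemma \ref{l3s2} and Poincar\'e's inequality. Differentiating under the integral sign produces two terms, namely $-\r_2\int_0^L\psi_{xt}\int_0^x\psi_t(y,t)\,dy\,dx$ and $-\r_2\int_0^L\psi_x\int_0^x\psi_{tt}(y,t)\,dy\,dx$, which I would treat separately.

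For the first term I would integrate by parts in $x$, exploiting that $\partial_x\int_0^x\psi_t(y,t)\,dy=\psi_t(x,t)$. The resulting boundary contribution $\big[\psi_t\int_0^x\psi_t(y,t)\,dy\big]_0^L$ vanishes: trivially at $x=0$, and at $x=L$ because $\int_0^L\psi_t(y,t)\,dy=\frac{d}{dt}\int_0^L\psi\,dx=0$ by the normalization $\int_0^L\psi\,dx=0$ secured through the change of variables. This produces exactly the term $\r_2\int_0^L\psi_t^2\,dx$.

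For the second term I would insert $\r_2\psi_{tt}=k_2\psi_{xx}-k_1(\vp_x+\psi+lw)-\int_0^tg(t-s)\psi_{xx}(s)\,ds$ and integrate each summand in $y$ over $(0,x)$. Using $\psi_x(0,t)=0$, the $k_2\psi_{xx}$ and memory contributions become $k_2\psi_x(x,t)$ and $\int_0^tg(t-s)\psi_x(x,s)\,ds$. Pairing against $-\psi_x$ and integrating over $(0,L)$ yields $-k_2\int_0^L\psi_x^2\,dx$, the coupling term $k_1\int_0^L\psi_x\int_0^x(\vp_x+\psi+lw)(y,t)\,dy\,dx$, and a memory term. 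In the last one I would write $\psi_x(s)=\psi_x(t)-(\psi_x(t)-\psi_x(s))$ inside the convolution, so the ``good'' part gives $\big(\int_0^tg(s)\,ds\big)\int_0^L\psi_x^2\,dx$ while the remainder is bounded by Young's inequality and Lemma \ref{l3s2} as $\d\int_0^L\psi_x^2\,dx+\frac{c}{\d}(g\circ\psi_x)$.

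The step requiring the most care is the coupling term. After applying Young's inequality with parameter $\d_2$ to split off $\frac{k_1}{2\d_2}\int_0^L\psi_x^2\,dx$, I am left with $\frac{k_1\d_2}{2}\int_0^L\big(\int_0^x(\vp_x+\psi+lw)(y,t)\,dy\big)^2\,dx$. The key observation is that the primitive $F(x):=\int_0^x(\vp_x+\psi+lw)(y,t)\,dy$ satisfies $F(0)=0$ and, crucially, $F(L)=\int_0^L\vp_x\,dx+\int_0^L\psi\,dx+l\int_0^Lw\,dx=0$, since $\vp(0,t)=\vp(L,t)=0$ and $\int_0^L\psi\,dx=\int_0^Lw\,dx=0$; hence $F\in H^1_0(0,L)$ and Poincar\'e's inequality gives $\int_0^LF^2\,dx\leq c_0\int_0^L(\vp_x+\psi+lw)^2\,dx$, which supplies precisely the coefficient $\tfrac{c_0k_1\d_2}{2}$. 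Collecting the three $\int_0^L\psi_x^2\,dx$ contributions into $\big(\tfrac{k_1}{2\d_2}+\int_0^tg(s)\,ds+\d-k_2\big)$ then gives \eqref{e5s3}.
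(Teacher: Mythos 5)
Your proposal is correct and follows essentially the same route as the paper: differentiate $I_5$, substitute the second equation of \eqref{p1}, split the convolution via $\psi_x(s)=\psi_x(t)-(\psi_x(t)-\psi_x(s))$, apply Young's inequality with $\d$ and $\d_2$ together with Lemma \ref{l3s2}, and finish with Poincar\'e's inequality on the primitive $\int_0^x(\vp_x+\psi+lw)\,dy$. In fact you supply details the paper leaves implicit, namely the vanishing of the boundary term via $\int_0^L\psi\,dx=0$ and the verification that the primitive lies in $H^1_0(0,L)$, which is exactly what legitimizes the Poincar\'e step.
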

\begin{proof}
Using equations of \eqref{p1} and repeating similar computations as above, we arrive at
\begin{eqnarray*}
I_5'(t)&=&\r_2\int_0^L\psi_t^2dx-k_2\int_0^L\psi_x^2dx+k_1\int_0^L\psi_x\int_0^x(\vp_x+\psi+lw)(y,t)dydx\\
&&+\int_0^L\psi_x\int_0^tg(t-s)\psi_x(s)dsdx\\
&\leq&\r_2\int_0^L\psi_t^2dx+\left(\frac{k_1}{2\d_2}+\int_0^tg(s)ds+\d-k_2\right)\int_0^L\psi_x^2dx\\
&&+\frac{k_1\d_2}{2}\int_0^L\left(\int_0^x(\vp_x+\psi+lw)(y,t)dy\right)^2dx+\frac{c}{\d}(g\circ\psi_x).
\end{eqnarray*}
Poincar\'e's inequality for the third term yields \eqref{e5s3}.
\end{proof}

\begin{lemma}\label{l6s3}
Assume that the hypotheses $(A1)$ and $(A2)$ hold. Then, for any $\ve_0,\,\ve_1,\,\ve_2,\,\d>0$, the functional $I_6$ defined by
\[I_6(t):=\r_2\int_0^L\psi_t(\vp_x+\psi+lw)dx+\frac{b\r_1}{k_1}\int_0^L\vp_t\psi_xdx-\frac{\r_1}{k_1}\int_0^L\vp_t\int_0^tg(t-s)\psi_x(s)dsdx\]
satisfies, along the solution of \eqref{p1}, the estimate
\begin{eqnarray}\label{e6s3}
I_6'(t)&\leq&-k_1\int_0^L(\vp_x+\psi+lw)^2dx+\left(\frac{lk_2k_3\ve_1}{2k_1}+\frac{lk_3\ve_2}{2k_1}\int_0^tg(s)ds+\d\right)\int_0^L(w_x-l\vp)^2dx  \nonumber\\
&&+\d\int_0^L\vp^2_tdx+\left(\frac{lk_2k_3}{2k_1\ve_1}+\frac{lk_3}{2k_1\ve_2}\int_0^tg(s)ds+\frac{c}{\d}g(t)\right)\int_0^L\psi_x^2dx\\
&&+\ve_0\int_0^Lw_t^2dx+\frac{c}{\ve_0}\int_0^L\psi_t^2dx+\frac{c}{\d}(g\circ\psi_x-g'\circ\psi_x)+\left(\frac{k_2\r_1}{k_1}-\r_2\right)\int_0^L\vp_t\psi_{xt}dx\nonumber.
\end{eqnarray}
\end{lemma}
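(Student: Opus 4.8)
The plan is to follow the same recipe as in Lemmas~\ref{l1s3}--\ref{l5s3}: differentiate $I_6$, eliminate the second-order time derivatives $\r_2\psi_{tt}$ and $\r_1\vp_{tt}$ using the second and first equations of \eqref{p1}, integrate by parts in $x$ (using $\vp(0,t)=\vp(L,t)=0$, $\psi_x(0,t)=\psi_x(L,t)=0$, and noting that these force the convolution $G:=\int_0^t g(t-s)\psi_x(s)\,ds$ to vanish at $x=0,L$ as well), and finally bound the resulting cross terms by Young's and Poincar\'e's inequalities together with Lemma~\ref{l3s2}. Writing $\Phi:=\vp_x+\psi+lw$, the three pieces of $I_6$ are engineered so that the differentiated first piece contributes the good term $-k_1\int_0^L\Phi^2\,dx$ (from the $-k_1\Phi$ part of $\r_2\psi_{tt}=k_2\psi_{xx}-k_1\Phi-G_x$), while the coefficient $b=k_2$ of the second piece and the memory subtraction in the third piece are present precisely to kill the dangerous highest-order contributions.

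Concretely, after substitution and one integration by parts, the elastic term $k_2\int_0^L\psi_{xx}\Phi\,dx$ produces $-k_2\int_0^L\psi_x\vp_{xx}\,dx$ together with $-k_2\int_0^L\psi_x^2\,dx$ and $-k_2l\int_0^L\psi_xw_x\,dx$, whereas differentiating the second piece and using $\r_1\vp_{tt}=k_1\Phi_x+lk_3(w_x-l\vp)$ produces $+b\int_0^L\vp_{xx}\psi_x\,dx$, $+b\int_0^L\psi_x^2\,dx$ and $+bl\int_0^L w_x\psi_x\,dx$; with $b=k_2$ all of these cancel identically. Likewise, the spatial memory term $-\int_0^L G_x\Phi\,dx$ from the first piece (after moving the derivative onto $\Phi$) cancels against $-\int_0^L \Phi_x G\,dx$ from the third piece, so every $G$-against-spatial-derivative term disappears. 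What survives from the first piece is, besides $-k_1\int_0^L\Phi^2\,dx$, the kinetic cross terms $\r_2\int_0^L\psi_t\vp_{xt}\,dx$, $\r_2\int_0^L\psi_t^2\,dx$ and $l\r_2\int_0^L\psi_tw_t\,dx$; from the second piece the terms $\tfrac{k_2\r_1}{k_1}\int_0^L\vp_t\psi_{xt}\,dx$ and $\tfrac{k_2lk_3}{k_1}\int_0^L(w_x-l\vp)\psi_x\,dx$; and from the third piece $-\tfrac{lk_3}{k_1}\int_0^L(w_x-l\vp)G\,dx$ and $-\tfrac{\r_1}{k_1}\int_0^L\vp_t G_t\,dx$. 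Integrating $\r_2\int_0^L\psi_t\vp_{xt}\,dx$ by parts (the boundary term vanishes since $\vp_t=0$ at $x=0,L$) turns it into $-\r_2\int_0^L\vp_t\psi_{xt}\,dx$, which combines with $\tfrac{k_2\r_1}{k_1}\int_0^L\vp_t\psi_{xt}\,dx$ to give exactly the leftover term $(\tfrac{k_2\r_1}{k_1}-\r_2)\int_0^L\vp_t\psi_{xt}\,dx$ that is not estimated (it vanishes precisely when $s_1=s_2$ and will be absorbed later).

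It then remains to bound the genuine cross terms. I would apply Young's inequality to $l\r_2\int_0^L\psi_tw_t\,dx$ with weight $\ve_0$ to get $\ve_0\int_0^Lw_t^2+\tfrac{c}{\ve_0}\int_0^L\psi_t^2$, and to $\tfrac{k_2lk_3}{k_1}\int_0^L(w_x-l\vp)\psi_x\,dx$ with weight $\ve_1$ to get the $\ve_1$-terms in \eqref{e6s3}. For $-\tfrac{lk_3}{k_1}\int_0^L(w_x-l\vp)G\,dx$ I would split $G=\big(\int_0^t g(s)\,ds\big)\psi_x-\int_0^t g(t-s)(\psi_x(t)-\psi_x(s))\,ds$: the first summand, via Young with weight $\ve_2$, yields the two terms carrying $\int_0^t g(s)\,ds$, while the second is controlled by $\d\int_0^L(w_x-l\vp)^2+\tfrac{c}{\d}(g\circ\psi_x)$ through Lemma~\ref{l3s2}. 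Finally $G_t=g(t)\psi_x-\int_0^t g'(t-s)(\psi_x(t)-\psi_x(s))\,ds$, so $-\tfrac{\r_1}{k_1}\int_0^L\vp_t G_t\,dx$ splits into a term estimated by $\d\int_0^L\vp_t^2+\tfrac{c}{\d}g(t)\int_0^L\psi_x^2$ (using $g(t)^2\le g(0)g(t)$) and a term estimated by $\d\int_0^L\vp_t^2-\tfrac{c}{\d}(g'\circ\psi_x)$ via Lemma~\ref{l3s2} applied to $-g'$. Collecting everything gives \eqref{e6s3}.

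The hard part is the bookkeeping of the memory terms: one must correctly compute $G_t=g(t)\psi_x-\int_0^t g'(t-s)(\psi_x(t)-\psi_x(s))\,ds$ (the boundary contribution $g(t)\psi_x$ is what produces the $g(t)\int_0^L\psi_x^2$ term), and one must verify that the spatial convolution terms $\int_0^L G_x\Phi\,dx$ and $\int_0^L\Phi_x G\,dx$ cancel and that the choice $b=k_2$ annihilates all $\int_0^L\vp_{xx}\psi_x$, $\int_0^L\psi_x^2$ and $\int_0^L\psi_x w_x$ contributions. This exact cancellation is the reason $I_6$ is defined with those particular coefficients, and getting it right is what makes the leading term emerge as the clean $-k_1\int_0^L\Phi^2\,dx$.
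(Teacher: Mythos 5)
Your proposal is correct and follows essentially the same route as the paper's own proof: differentiate $I_6$, substitute the first and second equations of \eqref{p1}, integrate by parts using the boundary conditions (with $b=k_2$ forcing the cancellation of the highest-order and convolution terms), and then estimate the surviving cross terms via Young's inequality and Lemma~\ref{l3s2}, after splitting $\int_0^t g(t-s)\psi_x(s)\,ds$ and $\partial_t\!\int_0^t g(t-s)\psi_x(s)\,ds$ into their $\psi_x(t)$-parts and history-deviation parts. The intermediate identity you derive, including the leftover term $\bigl(\tfrac{k_2\r_1}{k_1}-\r_2\bigr)\!\int_0^L\vp_t\psi_{xt}\,dx$, matches the paper's exactly.
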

\begin{proof}
Use of  equations of \eqref{p1} and integration by parts lead to
\begin{eqnarray*}
I'_6(t)&=&-k_1\int_0^L(\vp_x+\psi+lw)^2dx+\r_2\int_0^L\psi_t^2dx+l\r_2\int_0^L\psi_tw_tdx\\
&&+\frac{lk_2k_3}{k_1}\int_0^L(w_x-l\vp)\psi_xdx-\frac{lk_3}{k_1}\int_0^L(w_x-l\vp)\int_0^tg(t-s)\psi_x(s)dsdx\\
&&-\frac{\r_1}{k_1}g(t)\int_0^L\vp_t\psi_xdx+\frac{\r_1}{k_1}\int_0^L\vp_t\int_0^tg'(t-s)(\psi_x(t)-\psi_x(s))dsdx\\
&&+\left(\frac{k_2\r_1}{k_1}-\r_2\right)\int_0^L\vp_x\psi_{xt}dx.
\end{eqnarray*}
Next, we estimate the terms in the right-hand side of the above equation.
\par Exploiting Young's inequality, we get
\[l\r_2\int_0^L\psi_tw_tdx\leq\ve_0\int_0^Lw^2_tdx+\frac{c}{\ve_0}\int_0^L\psi_t^2dx,\qquad\forall\,\ve_0>0.\]
Using Young's inequality and Lemma \ref{e3s2}, we obtain, for any $\ve_1,\,\ve_2,\,\d>0$,
\begin{eqnarray*}
\lefteqn{\frac{lk_2k_3}{k_1}\int_0^L(w_x-l\vp)\psi_xdx-\frac{lk_3}{k_1}\int_0^L(w_x-l\vp)\int_0^tg(t-s)\psi_x(s)dsdx}\\
&=&\frac{lk_3}{k_1}\left(k_2-\int_0^tg(s)ds\right)\int_0^L(w_x-l\vp)\psi_xdx\\
&&+\frac{lk_3}{k_1}\int_0^L(w_x-l\vp)\int_0^tg(t-s)(\psi_x(t)-\psi_x(s))dsdx\\
&\leq&\left(\frac{lk_2k_3\ve_1}{2k_1}+\frac{lk_3\ve_2}{2k_1}\int_0^tg(s)ds+\d\right)\int_0^L(w_x-l\vp)^2dx\\
&&+\left(\frac{lk_2k_3}{2k_1\ve_1}+\frac{lk_3}{2k_1\ve_2}\int_0^tg(s)ds\right)\int_0^L\psi_x^2dx+\frac{c}{\d}(g\circ\psi_x)
\end{eqnarray*}
and
\begin{eqnarray*}
\lefteqn{-\frac{\r_1}{k_1}g(t)\int_0^L\vp_t\psi_xdx+\frac{\r_1}{k_1}\int_0^L\vp_t\int_0^tg'(t-s)(\psi_x(t)-\psi_x(s))dsdx}\\
&\leq&\d\int_0^L\vp_t^2dx+\frac{c}{\d}g(t)\int_0^L\psi_x^2dx-\frac{c}{\d}(g'\circ\psi_x).
\end{eqnarray*}
A combination of these estimates gives the desired result.

\end{proof}


\section{\small{General Decay Rates for Equal Speeds of Wave Propagation}}
\label{sec4}
In this section, we state and prove a general decay result under equal speeds of wave propagation condition. The exponential and polynomial decay results are only special cases.
\begin{theorem}\label{t1s4}
Let $(\vp_0,\vp_1)\in H^1_0(0,L)\times L^2(0,L)$ and $(\psi_0,\psi_1),\,(w_0,w_1)\in H^1_*(0,L)\times L^2_*(0,L)$. Assume that  $(A1)$ and $(A2)$ hold and that
\begin{equation}\label{e1's4}
\frac{k_1}{\r_1}=\frac{k_2}{\r_2}\qquad\mathrm{and}\qquad k_1=k_3.
\end{equation} Then for $l$ small enough and for any $t_0>0$, the solution of \eqref{p1} satisfies, for $t> t_0$,
\begin{equation}\label{e1s4}
E(t)\leq C\exp\left(-\l\int_{t_0}^t\xi(s)ds\right),\qquad\mathrm{for\,\,\,\,} p=1,
\end{equation}
and
\begin{equation}\label{e2s4}
E(t)\leq C\left(\frac{1}{1+\int_{t_0}^t\xi^{2p-1}(s)ds}\right)^{\frac{1}{2p-2}},\qquad \mathrm{for\,\,\,\,} 1<p<\frac{3}{2},
\end{equation}
where $C>0$ is a constant independent of $t$ but may depend on the initial data  and $\l>0$ is a constant independent of both $t$ and the initial data.
Moreover, if
\begin{equation}\label{e3s4}
\int_{t_0}^{+\infty}\left(\frac{1}{1+\int_{t_0}^t\xi^{2p-1}(s)ds}\right)^{\frac{1}{2p-2}}dt<+\infty,\qquad \mathrm{for\,\,\,\,} 1<p<\frac{3}{2},
\end{equation}
then
\begin{equation}\label{e4s4}
E(t)\leq C\left(\frac{1}{1+\int_{t_0}^t\xi^p(s)ds}\right)^{\frac{1}{p-1}},\qquad \mathrm{for\,\,\,\,} 1<p<\frac{3}{2}.
\end{equation}
\end{theorem}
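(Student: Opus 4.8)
The plan is to build a Lyapunov functional $\calL(t)=NE(t)+\sum_{i=1}^{6}N_iI_i(t)$ from the functionals of Lemmas~\ref{l1s3}--\ref{l6s3}, with large positive constants $N,N_i$ fixed in a definite order, and to show first that $\calL$ is equivalent to $E$ and then that it obeys a differential inequality $\calL'(t)\leq -mE(t)+c(g\circ\psi_x)(t)$ for some $m>0$ and all $t\geq t_0$. The equivalence $\b_1E\leq\calL\leq\b_2E$ follows from Young's and Poincar\'e's inequalities once $N$ is large. For the differential inequality I would add the six estimates weighted by the $N_i$: $I_4$ supplies the negative kinetic energy $-\int_0^L(\r_1\vp_t^2+\r_2\psi_t^2+\r_1w_t^2)dx$, $I_5$ the negative $\int_0^L\psi_x^2dx$, $I_6$ the negative $\int_0^L(\vp_x+\psi+lw)^2dx$, and $I_2,I_3$ the negative $\int_0^L(w_x-l\vp)^2dx$, while every positive cross-term carries a tunable $\d,\ve_0,\ve_i,\d_j$ or a factor of $l$ and is absorbed by fixing the small parameters, then enlarging the $N_i$ in the right order, and finally taking $l$ small. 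The two genuinely obstructive terms are $\r_1(k_3/k_1-1)\int_0^L\vp_{xt}w_tdx$ in $I_3'$ and $(k_2\r_1/k_1-\r_2)\int_0^L\vp_t\psi_{xt}dx$ in $I_6'$, which resist absorption; the hypotheses \eqref{e1's4}, i.e. $k_1=k_3$ and $k_1/\r_1=k_2/\r_2$, make both coefficients vanish identically, which is precisely why equal speeds are needed.

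For $p=1$, Corollary~\ref{c1s2} reads $\xi(t)(g\circ\psi_x)(t)\leq c(-E'(t))$. Multiplying the differential inequality by $\xi(t)$ and using that $\xi$ is non-increasing (so $(\xi\calL)'\leq\xi\calL'$) gives $(\xi\calL)'(t)\leq -m\xi(t)E(t)-cE'(t)$. Setting $\calF=\xi\calL+cE$, still equivalent to $E$ since $\xi$ is bounded, yields $\calF'(t)\leq -m\xi(t)E(t)\leq -\l\xi(t)\calF(t)$, and integration from $t_0$ to $t$ gives \eqref{e1s4}.

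For $1<p<\tfrac32$, Corollary~\ref{c1s2} gives only $\xi(g\circ\psi_x)\leq c(-E')^{1/(2p-1)}$, so I would multiply the differential inequality by $\xi^{2p-1}E^{2p-2}$ and apply Young's inequality with exponents $\tfrac{2p-1}{2p-2}$ and $2p-1$ to the memory term; this weight is forced by requiring the surviving power of $E$ to be $E^{2p-1}$ and the power of $(-E')$ to be linear. Since $\xi$ and $E$ are non-increasing, the left-hand side dominates $\big(\xi^{2p-1}E^{2p-2}\calL\big)'$, and after adding a multiple of $E$ one obtains a functional $\calH\sim E$ satisfying $\calH'(t)\leq -c\,\xi^{2p-1}(t)\calH^{2p-1}(t)$. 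Integrating this scalar inequality, by dividing through by $\calH^{2p-1}$ and integrating $\tfrac{d}{dt}\calH^{2-2p}$, yields the polynomial bound \eqref{e2s4}.

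Finally, the extra hypothesis \eqref{e3s4} together with \eqref{e2s4} forces $\int_{t_0}^{\infty}E(s)ds<\infty$. As $E$ is non-increasing, $tE(t)\leq\int_0^tE(s)ds$, so the total mass $M(t):=\int_0^L\int_0^t(\psi_x(t)-\psi_x(s))^2dsdx\leq c\int_0^tE(s)ds$ is bounded uniformly in $t$. Applying Jensen's inequality (Lemma~\ref{l8s2}) with $G(y)=y^{1/p}$, $f=g^p$, and the measure $(\psi_x(t)-\psi_x(s))^2dsdx$ of total mass $M(t)$ gives $(g\circ\psi_x)(t)\leq M(t)^{1-1/p}(g^p\circ\psi_x)(t)^{1/p}\leq c(g^p\circ\psi_x)(t)^{1/p}$; distributing $\xi$ and using $\xi(t)(g^p\circ\psi_x)(t)\leq(-g'\circ\psi_x)(t)\leq -2E'(t)$ (from Lemma~\ref{l2s2}) then sharpens the memory estimate to $\xi(t)(g\circ\psi_x)(t)\leq c(-E'(t))^{1/p}$. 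Repeating the previous paragraph with $p$ in place of $2p-1$, i.e. weighting by $\xi^{p}E^{p-1}$ and using Young's inequality with exponents $\tfrac{p}{p-1}$ and $p$, delivers \eqref{e4s4}. I expect the constant bookkeeping in the Lyapunov construction to be the main labour, while the conceptually delicate point is the uniform boundedness of $M(t)$, which is exactly what upgrades the decay exponent from $\tfrac{1}{2p-1}$ to $\tfrac1p$.
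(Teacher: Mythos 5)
Your proposal follows essentially the same route as the paper's proof: the same Lyapunov functional $\calL=NE+\sum_j N_jI_j$ whose two obstructive cross-terms are killed by \eqref{e1's4}, the same weighted multipliers $\xi$, $(\xi E)^{2p-2}$ and $(\xi E)^{p-1}$ with Young's inequality, and the same Jensen-based sharpening of the memory estimate to $\xi(g\circ\psi_x)\leq c(-E')^{1/p}$ using the uniform bound on $M(t)$ (the paper's $\eta(t)$) coming from $\int_0^\infty E(s)\,ds<\infty$. The only omission is the paper's trivial side remark handling $\eta(t)=0$, which is immaterial.
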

\begin{remark}\label{r1s4}
Inequalities \eqref{e2s4} and \eqref{e3s4} together give
\[\int_0^{+\infty}E(t)dt<+\infty.\]
\end{remark}
\begin{remark}
The smallness condition on $l$ makes the Bresse system close to Timoshenko system and, hence, inherits some of its stability properties.
\end{remark}
\begin{proof}[Proof of Theorem \ref{t1s4}]
Define a functional $\calL$ by
\[\calL:=NE+\sum_{j=1}^{6}N_jI_j,\]
where $N,\,N_j>0$ for $j=1,2,\dots,6$ with $N_3=N_6=1$. Then from \eqref{e1s3} $-$ \eqref{e6s3} we have
\begin{eqnarray*}
\calL'(t)&\leq&\left[-\r_1(k_1N_2+N_4)+\frac{l\r_1|k_3-k_1|\d_1N_2}{2}+\frac{l\r_1k_3}{k_1}+\ve_0N_2+\d\right]\int_0^L\vp_t^2dx\\
&&+\left[-\r_2\left(N_1\int_0^tg(s)ds+N_4-N_5\right)+\r_2\d N_1+\frac{c}{\ve_0}(1+N_2)\right]\int_0^L\psi_t^2dx\\
&&+\left[-l\r_1+\r_1(k_3N_2-N_4)+\frac{c_0l\r_1|k_3-k_1|N_2}{2\d_1}+\ve_0\right]\int_0^Lw_t^2dx\\
&&+\left[(N_5-N_4)\int_0^tg(s)ds+k_2(N_4-N_5)+\frac{k_1N_5}{2\d_2}+\frac{lk_2k_3}{2k_1\ve_1}\right.\\
&&\left.+\frac{lk_3}{2k_1\ve_2}\int_0^tg(s)ds+\d(cN_1+N_4+N_5)+\frac{c}{\d}g(t)\right]\int_0^L\psi_x^2dx\\
&&+\left[-\frac{lk_3^2}{k_1}-k_3(k_3N_2-N_4)+\frac{lk_2k_3\ve_1}{2k_1}+\frac{lk_2k_3\ve_2}{2k_1}\int_0^tg(s)ds+\d\right]\int_0^L(w_x-l\vp)^2dx\\
&&+\left[-k_1\left(1-k_1N_2-l-N_4-\frac{c_0\d_2N_5}{2}\right)+\d N_1\right]\int_0^L(\vp_x+\psi+lw)^2dx\\
&&+\frac{c}{\d}(1+N_1+N_4+N_5)g\circ\psi_x-\frac{c}{\d}(1+N_1)g'\circ\psi_x+NE'(t)\\
&&+\left(\frac{k_2\r_1}{k_1}-\r_2\right)\int_0^L\vp_t\psi_{xt}dx+\r_1\left(\frac{k_3}{k_1}-1\right)\int_0^L\vp_{xt}w_tdx.
\end{eqnarray*}
By setting  $\d_1=1,\,\,\,N_4=k_3N_2,\,\,\,N_5=4k_3N_2,\,\,\,\d_2=\frac{k_1}{k_2-g_0},\,\,\,\ve_1=\frac{k_3}{k_2},\,\,$ and $\ve_2=\frac{k_3}{2g_0},\,\,$ where ${\displaystyle g_0=\int_0^\infty g(s)ds}$, we arrive at

\begin{eqnarray*}
\calL'(t)&\leq&-\r_1\left[(k_1+k_3)N_2-l\left(\frac{|k_3-k_1|}{2}N_2+\frac{k_3}{k_1}\right)\right]\int_0^L\vp_t^2dx\\
&&-\r_2\left(N_1\int_0^tg(s)ds-3k_3N_2\right)\int_0^L\psi_t^2dx-l\r_1\left(1-\frac{c_0|k_3-k_1|}{2}N_2\right)\int_0^Lw_t^2dx\\
&&-\left[(k_2-g_0)k_3N_2-\frac{l}{k_1}\left(\frac{k_2^2}{2}+g_0^2\right)\right]\int_0^L\psi_x^2dx-\frac{lk_3^2}{4k_1}\int_0^L(w_x-l\vp)^2dx\\
&&-k_1\left[1-\left(k_1+k_3+\frac{2c_0k_1k_3}{k_2-g_0}\right)N_2-l\right]\int_0^L(\vp_x+\psi+lw)^2dx\\
&&+(1+N_2)\ve_0\int_0^L(\vp_t^2+w_t^2)dx+\frac{c}{\ve_0}(1+N_2)\int_0^L\psi_t^2dx+NE'(t)\\
&&+\d\int_0^L\Big(\vp_t^2+\r_2N_1\psi_t^2+c(N_1+5k_3N_2)\psi_x^2+N_1(\vp_x+\psi+lw)^2\Big)dx\\
&&+\frac{c}{\d}(1+N_1+5k_3N_2)g\circ\psi_x+\frac{c}{\d}(1+N_1)\left[g(t)\int_0^L\psi_x^2dx-g'\circ\psi_x\right]\\
&&+\left(\frac{k_2\r_1}{k_1}-\r_2\right)\int_0^L\vp_t\psi_{xt}dx+\r_1\left(\frac{k_3}{k_1}-1\right)\int_0^L\vp_{xt}w_tdx.
\end{eqnarray*}
Now, we set $\ve_0=\frac{l\r_1}{2(1+N_2)}$, to get
\begin{eqnarray*}
\calL'(t)&\leq&-\r_1\left[(k_1+k_3)N_2-l\left(\frac{1}{2}+\frac{k_3}{k_1}+\frac{|k_3-k_1|}{2}N_2\right)\right]\int_0^L\vp_t^2dx\\
&&-\r_2\left(N_1\int_0^tg(s)ds-3k_3N_2-\frac{c(1+N_2)^2}{l\r_1\r_2}\right)\int_0^L\psi_t^2dx-\frac{lk_3^2}{4k_1}\int_0^L(w_x-l\vp)^2dx\\
&&-\frac{l\r_1}{2}\left(1-c_0|k_3-k_1|N_2\right)\int_0^Lw_t^2dx-\left[(k_2-g_0)k_3N_2-\frac{l}{k_1}\left(\frac{k_2^2}{2}+g_0^2\right)\right]\int_0^L\psi_x^2dx\\
&&-k_1\left[1-\left(k_1+k_3+\frac{2c_0k_1k_3}{k_2-g_0}\right)N_2-l\right]\int_0^L(\vp_x+\psi+lw)^2dx\\
&&+\d c_{N_1,N_2}E(t)+\left[N-\frac{c}{\d}(1+N_1)\right]E'(t)+\frac{c}{\d}(1+N_1+5k_3N_2)g\circ\psi_x\\
&&+\left(\frac{k_2\r_1}{k_1}-\r_2\right)\int_0^L\vp_t\psi_{xt}dx+\r_1\left(\frac{k_3}{k_1}-1\right)\int_0^L\vp_{xt}w_tdx.
\end{eqnarray*}
Fix $t_0>0$ and choose $N_2$ so small that
\[1-c_0|k_3-k_1|N_2>0\qquad\mathrm{and}\qquad 1-\left(k_1+k_3+\frac{2c_0k_1k_3}{k_2-g_0}\right)N_2>0.\]
Next, we select $l$ small enough so that
\[(k_1+k_3)N_2-l\left(\frac{1}{2}+\frac{k_3}{k_1}+\frac{|k_3-k_1|}{2}N_2\right)>0,\qquad (k_2-g_0)k_3N_2-\frac{l}{k_1}\left(\frac{k_2^2}{2}+g_0^2\right)>0,\]
and
\[1-\left(k_1+k_3+\frac{2c_0k_1k_3}{k_2-g_0}\right)N_2-l>0.\]
After that, we pick $N_1$ very large so that \[N_1\int_0^{t_0}g(s)ds-3k_3N_2-\frac{c(1+N_2)^2}{l\r_1\r_2}>0.\] Therefore, we have
\begin{eqnarray*}
\calL'(t)&\leq&-(\b-c\d) E(t)+\left(N-\frac{c}{\d}\right)E'(t)+\frac{c}{\d}g\circ\psi_x\\
&&+\left(\frac{k_2\r_1}{k_1}-\r_2\right)\int_0^L\vp_t\psi_{xt}dx+\r_1\left(\frac{k_3}{k_1}-1\right)\int_0^L\vp_{xt}w_tdx,\nonumber
\end{eqnarray*}
for some $\b>0$. At this point, we take $\displaystyle\d<\frac{\b}{c}$. Consequently, we obtain, for some $k>0$,
\begin{eqnarray}\label{e5s4}
\calL'(t)&\leq&-kE(t)+(N-c)E'(t)+c(g\circ\psi_x)\nonumber\\
&&+\left(\frac{k_2\r_1}{k_1}-\r_2\right)\int_0^L\vp_t\psi_{xt}dx+\r_1\left(\frac{k_3}{k_1}-1\right)\int_0^L\vp_{xt}w_tdx,\qquad\forall\,t\geq t_0.
\end{eqnarray}
Finally, we choose N so large that $N>c$ and $\calL\sim E$, therefore we have, $\forall\, t\geq t_0$,
\begin{equation}\label{e6s4}
\calL'(t)\leq-kE(t)+c(g\circ\psi_x)+\left(\frac{k_2\r_1}{k_1}-\r_2\right)\int_0^L\vp_t\psi_{xt}dx+\r_1\left(\frac{k_3}{k_1}-1\right)\int_0^L\vp_{xt}w_tdx.
\end{equation}
\par Note that from this point, the proof goes similarly as in \cite{Messaoudi2017}. But we will continue for the sake of completeness.
\par By recalling \eqref{e1's4} and multiplying both sides of \eqref{e6s4} by $\xi(t)$ and using Corollary \ref{c1s2}, we arrive at
\begin{equation}\label{e7s4}
\xi(t)\calL'(t)\leq -k\xi(t)E(t)+c\xi(t)(g\circ\psi_x)(t)\leq-k\xi(t)E(t)+c(-E'(t))^{\frac{1}{2p-1}},\qquad\forall\,t\geq t_0.
\end{equation}
\par For $p=1$, it follows from non-increasing property of $\xi$ and \eqref{e7s4} that
\[\big(\xi(t)\calL(t)+cE(t)\big)'\leq\xi(t)\calL'(t)+cE'(t)\leq-k\xi(t)E(t),\qquad\forall\,t\geq t_0.\]
Using the fact that $\calF=\xi\calL+cE\sim E$, there exists a $\l>0$ such that
\[\calF'(t)\leq-\l\xi(t)\calF(t),\qquad\forall\,t\geq t_0.\] 
A simple integration over $(t,t_0)$ leads to
\[E(t)\leq C\exp\left(-\l\int_{t_0}^t\xi(s)ds\right),\qquad\forall\,t\geq t_0.\]

\par For $1<p<\frac{3}{2}$, we multiply both sides of \eqref{e7s4} by $(\xi E)^\a(t)$, with $\a=2p-2$, to obtain
\[\xi^{\a+1}(t)E^\a(t)\calL'(t)\leq-k(\xi E)^{\a+1}(t)+c(\xi E)^\a(t)(-E'(t))^{\frac{1}{2p-1}}.\]
Applying  Young's inequality with $\displaystyle q=\frac{\a+1}{\a}$ and $q'=\a+1$, we get
\[\xi^{\a+1}(t)E^\a(t)\calL'(t)\leq -(k-c\g)(\xi E)^{\a+1}(t)-c_\gamma E'(t),\qquad\forall\g>0.\]
We choose $\g$ such that $\l_1:=k-c\g>0$ and use the non-increasing property of $\xi$ and $E$, to have
\[(\xi^{\a+1}E^\a\calL)'(t)\leq\xi^{\a+1}(t)E^\a(t)\calL'(t)\leq-\l_1(\xi E)^{\a+1}(t)-cE'(t),\]
this entails that
\[(\xi^{\a+1}E^\a\calL+cE)'(t)\leq-\l_1(\xi E)^{\a+1}(t).\]
Let $\calF=\xi^{\a+1}E^\a\calL+cE\sim E$, then
\[\calF'(t)\leq-\l\xi^{\a+1}(t)\calF^{\a+1}(t),\]
for some $\l>0$.
Integration over $(t_0,t)$ gives
\[E(t)\leq C\left(\frac{1}{1+\int_{t_0}^t\xi^{2p-1}(s)ds}\right)^{\frac{1}{2p-2}},\qquad\forall t\geq t_0.\]
This establishes \eqref{e2s4}.
\par To prove \eqref{e4s4}, we treat  \eqref{e7s4} as follows
\begin{eqnarray}\label{e8s4}
\xi(t)\calL'(t)&\leq&-k\xi(t)E(t)+c\xi(t)(g\circ\psi_x)(t)\nonumber\\
&\leq&-k\xi(t)E(t)+c\frac{\eta(t)}{\eta(t)}\int_0^t\big(\xi^p(s)g^p(s)\big)^{\frac{1}{p}}\|\psi_x(t)-\psi_x(t-s)\|^2_2ds,
\end{eqnarray}
for any $t\geq t_0$, where
\begin{eqnarray*}
\eta(t)&=&\int_0^t\|\psi_x(t)-\psi_x(t-s)\|^2_2ds\leq 2\int_0^t(\|\psi_x(t)\|^2_2+\|\psi_x(t-s)\|^2_2)ds\\
&\leq& 4\int_0^t(E(t)+E(t-s))ds\leq 8\int_0^tE(t-s)ds=8\int_0^tE(s)ds\\
&\leq& 8\int_0^{+\infty}E(s)ds<+\infty,
\end{eqnarray*}
by Remark \ref{r1s4}.
Applying Jensen's inequality to the second term in the right-hand side of \eqref{e8s4}, with $\displaystyle G(y)=y^{\frac{1}{p}},$ $y>0$, $f(s)=\xi^p(s)g^p(s)$ and $h(s)=\|\psi_x(t)-\psi_x(t-s)\|^2_2$, we obtain
\[\xi(t)\calL'(t)\leq-k\xi(t)E(t)+c\eta(t)\left(\frac{1}{\eta(t)}\int_0^t\xi^p(s)g^p(s)\|\psi_x(t)-\psi_x(t-s)\|^2_2ds\right)^{\frac{1}{p}},\]
where we assume that $\eta(t)>0$, otherwise we get, from \eqref{e6s4},
\[E(t)\leq C\exp(-kt),\qquad\forall t\geq t_0.\]
Therefore,
\begin{eqnarray*}
\xi(t)\calL'(t)&\leq&-k\xi(t)E(t)+c\eta^{\frac{p-1}{p}}(t)\left(\xi^{p-1}(0)\int_0^t\xi(s)g^p(s)\|\psi_x(t)-\psi_x(t-s)\|^2_2ds\right)^{\frac{1}{p}}\\
&\leq&-k\xi(t)E(t)+c(-g'\circ\psi_x)^{\frac{1}{p}}(t)\leq-k\xi(t)E(t)+c(-E'(t))^{\frac{1}{p}}.
\end{eqnarray*}
Multiplying both sides of the above inequality by $(\xi E)^\a(t)$, for $\a=p-1$, and repeating the above computations, we arrive at
\[E(t)\leq C\left(\frac{1}{1+\int_{t_0}^t\xi^p(s)ds}\right)^{\frac{1}{p-1}},\qquad\forall t> t_0,\]
which establishes \eqref{e4s4}.
\end{proof}
\begin{example}\label{ex1s4}
Let $\displaystyle g(t)=\frac{a}{(1+t)^q}$ with $q>2$, and $a>0$ is to be chosen so that $(A1)$ is satisfied. Then
\begin{equation*}
g'(t)=-a_0\left(\frac{a}{(1+t)^q}\right)^{\frac{q+1}{q}}=-\xi(t)g^p(t),
\end{equation*}
with $\displaystyle\xi(t)=a_0=\frac{q}{a^{1/q}}$ and $\displaystyle p=\frac{q+1}{q}<\frac{3}{2}$, we have, for any fixed $t_0>0$,
\[\int_{t_0}^{+\infty}\left(\frac{1}{1+\int_{t_0}^t\xi^{2p-1}(s)ds}\right)^{\frac{1}{2p-2}}dt=\int_{t_0}^{+\infty}\left(\frac{1}{1+c(t-t_0)}\right)^{\frac{1}{2p-2}}dt<+\infty.\] Therefore, inequality \eqref{e4s4} entails that there exists $C>0$ such that
\[E(t)\leq C\left(\frac{1}{1+\int_{t_0}^t\xi^p(s)ds}\right)^{\frac{1}{p-1}}=\frac{c}{(1+t)^{q}},\]
with the optimal decay rate $q$. For more examples, see \cite{Messaoudi2017}.
\end{example}


\section{\small{General Decay Rate for Different Speeds of Wave Propagation}}\label{sec5}
In this section, we state and prove a generalized decay result in the case of non-equal speeds of wave propagation. We start by differentiating both sides of the differential equations in \eqref{p1} with respect to $t$ and use the fact that
\begin{eqnarray*}
\frac{\partial}{\partial t}\left[\int_0^tg(t-s)\psi_{xx}(s)ds\right]&=&\frac{\partial}{\partial t}\left[\int_0^tg(s)\psi_{xx}(t-s)ds\right]=g(t)\psi_{xx}(0)+\int_0^tg(s)\psi_{xxt}(t-s)ds\\
&=&\int_0^tg(t-s)\psi_{xxt}(s)ds+g(t)\psi_{0xx},
\end{eqnarray*}
to obtain the following system
\begin{equation}\label{p2}
\begin{cases}
\r_1\vp_{ttt}-k_1(\vp_{xt}+\psi_t+lw_t)_x-lk_3(w_{xt}-l\vp_t) =0,\\
\r_2\psi_{ttt}-k_2\psi_{xxt}+k_1(\vp_{xt}+\psi_t+lw_t)+\displaystyle\int_0^tg(t-s)\psi_{xxt}(s)ds+g(t)\psi_{0xx}=0,\\
\r_1w_{ttt}-k_3(w_{xt}-l\vp_t)_x+lk_1(\vp_{xt}+\psi_t+lw_t)=0.
\end{cases}\tag{$P_*$}
\end{equation}
The energy functional associated to \eqref{p2} is given by
\begin{equation}
\begin{split}
E_*(t):= \,\,\,&\frac{1}{2}\int_0^L\left[\r_1\vp_{tt}^2+\r_2\psi_{tt}^2+\r_1w_{tt}^2+\left(k_2-\int_0^tg(s)ds\right)\psi_{xt}^2\right.\\
&+\left.\vphantom{\int_0^t}k_3(w_{xt}-l\vp_t)^2+k_1(\vp_{xt}+\psi_t+lw_t)^2\right]dx+\frac{1}{2}(g\circ\psi_{xt})(t),\quad\forall\,t\geq0,
\end{split}
\end{equation}
Using similar arguments as in \cite[Lemma~3.11]{Guesmia2013} we have the following result.
\begin{lemma}\label{l5s2}
Let $(\vp,\psi,w)$ be the strong solution of \eqref{p1}. Then, the energy of \eqref{p2} satisfies, for all $t\geq0$,
\begin{equation}\label{e5s2}
E_*'(t)=-\frac{1}{2}g(t)\int_0^L\psi_{xt}^2dx+\frac{1}{2}(g'\circ\psi_{xt})-g(t)\int_0^L\psi_{tt}\psi_{0xx}dx
\end{equation}
and
\begin{equation}\label{e6s2}
E_*(t)\leq c\left(E_*(0)+\int_0^L\psi_{0xx}^2dx\right).
\end{equation}
\end{lemma}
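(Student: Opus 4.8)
The plan is to prove the two assertions separately: the identity \eqref{e5s2} by reproducing, for the differentiated system \eqref{p2}, exactly the energy computation that yields Lemma \ref{l2s2}, and the bound \eqref{e6s2} by integrating \eqref{e5s2} with the aid of Gronwall's inequality.

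For \eqref{e5s2} I would multiply the three equations of \eqref{p2} by $\vp_{tt}$, $\psi_{tt}$, and $w_{tt}$ respectively, integrate over $(0,L)$, and add. Integrating by parts in $x$ and using the differentiated boundary conditions (which force $\vp_{tt}=\psi_{xt}=w_{xt}=0$, hence also $w_{xt}-l\vp_t=0$, at $x=0,L$), the elastic and coupling terms assemble, just as in the derivation of \eqref{e1s2}, into $\frac{d}{dt}$ of the first five summands of $E_*$: the two coupling groups combine into $\frac{k_1}{2}\frac{d}{dt}\int_0^L(\vp_{xt}+\psi_t+lw_t)^2\,dx$ and $\frac{k_3}{2}\frac{d}{dt}\int_0^L(w_{xt}-l\vp_t)^2\,dx$, while the time-derivative terms give the kinetic part. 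The only delicate contributions are the memory term and the forcing term $g(t)\psi_{0xx}$ in the second equation.

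After one integration by parts the memory term gives $-\int_0^L\psi_{xtt}\int_0^t g(t-s)\psi_{xt}(s)\,ds\,dx$. Writing $\psi_{xt}(s)=\psi_{xt}(t)-\big(\psi_{xt}(t)-\psi_{xt}(s)\big)$ and using the standard convolution identity
\[
\int_0^L v_t\int_0^t g(t-s)\big(v(t)-v(s)\big)\,ds\,dx=\frac{1}{2}\frac{d}{dt}(g\circ v)(t)-\frac{1}{2}(g'\circ v)(t)
\]
with $v=\psi_{xt}$, together with $-\big(\int_0^t g(s)\,ds\big)\int_0^L\psi_{xtt}\psi_{xt}\,dx=-\frac{1}{2}\big(\int_0^t g\big)\frac{d}{dt}\int_0^L\psi_{xt}^2\,dx$, produces precisely $\frac{d}{dt}$ of the memory part of $E_*$ plus the dissipation $-\frac{1}{2}g(t)\int_0^L\psi_{xt}^2\,dx+\frac{1}{2}(g'\circ\psi_{xt})(t)$. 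The forcing term, multiplied by $\psi_{tt}$ and integrated, contributes exactly $-g(t)\int_0^L\psi_{tt}\psi_{0xx}\,dx$, which is the extra summand in \eqref{e5s2}; collecting everything gives the identity. For \eqref{e6s2} I would then discard the two non-positive terms in \eqref{e5s2} (recall $g\geq0$ and $g'\leq0$ by $(A1)$) to obtain $E_*'(t)\leq-g(t)\int_0^L\psi_{tt}\psi_{0xx}\,dx$; Young's inequality bounds the right-hand side by $\frac{g(t)}{2}\int_0^L\psi_{tt}^2\,dx+\frac{g(t)}{2}\int_0^L\psi_{0xx}^2\,dx$, and since $\frac{\r_2}{2}\int_0^L\psi_{tt}^2\,dx\leq E_*(t)$ this yields the differential inequality $E_*'(t)\leq\frac{g(t)}{\r_2}E_*(t)+\frac{g(t)}{2}\int_0^L\psi_{0xx}^2\,dx$. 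As $g$ is integrable with $g_0:=\int_0^\infty g(s)\,ds<\infty$ by $(A1)$, Gronwall's inequality gives $E_*(t)\leq e^{g_0/\r_2}\big(E_*(0)+\frac{g_0}{2}\int_0^L\psi_{0xx}^2\,dx\big)$, which is \eqref{e6s2}.

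I expect the main obstacle to be bookkeeping in the first part: although $E_*$ is built from the time-differentiated fields, one must check that the forcing term $g(t)\psi_{0xx}$ is the \emph{only} source of a term absent from the undamped balance, so that its single contribution $-g(t)\int_0^L\psi_{tt}\psi_{0xx}\,dx$ accounts for the entire deviation of \eqref{e5s2} from the clean identity \eqref{e2s2}. Once this is confirmed, the Gronwall step for \eqref{e6s2} closes at once because of the integrability of $g$.
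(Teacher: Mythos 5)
Your proof is correct and takes exactly the route the paper intends: the paper gives no proof of this lemma at all, deferring to \cite{Guesmia2013} (``Lemma 3.11''), and your argument---the energy identity for the time-differentiated system \eqref{p2}, where the sole new term $g(t)\psi_{0xx}$ multiplied by $\psi_{tt}$ yields the extra summand $-g(t)\int_0^L\psi_{tt}\psi_{0xx}\,dx$ in \eqref{e5s2}, followed by Young's inequality and Gronwall's lemma using $\int_0^\infty g(s)\,ds<\infty$ to get \eqref{e6s2}---is precisely the standard computation that citation encapsulates. The only point worth making explicit is that your step $\tfrac{\rho_2}{2}\int_0^L\psi_{tt}^2\,dx\leq E_*(t)$ relies on every summand of $E_*$ being nonnegative, which holds since $(A1)$ gives $k_2-\int_0^t g(s)\,ds\geq k_2-\int_0^\infty g(s)\,ds>0$.
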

\begin{lemma}\label{l6s2}
Assume that hypotheses $(A1)$ and $(A2)$ hold and let $(\vp,\psi,w)$ be the strong solution of \eqref{p1}. Then, for any $0<\sigma<1$, we have
\[g\circ\psi_{xt}\leq\left[c\left(E_*(0)+\int_0^L\psi_{0xx}^2dx\right)\int_0^tg^{1-\s}(s)ds\right]^{\frac{p-1}{p+\s-1}}\big(g^p\circ\psi_{xt}\big)^{\frac{\s}{p+\s-1}}.\]
In particular, for $\s=\frac{1}{2}$, we get the following inequality
\begin{equation}\label{e7s2}
g\circ\psi_{xt}\leq c\left(\int_0^tg^{1/2}(s)ds\right)^{\frac{2p-2}{2p-1}}\left(g^p\circ\psi_{xt}\right)^{\frac{1}{2p-1}}.
\end{equation}
\end{lemma}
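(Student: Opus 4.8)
The plan is to mirror the proof of Lemma \ref{l4s2}, replacing $\psi_x$ by $\psi_{xt}$ and invoking the same H\"older interpolation technique, with one essential modification: whereas there $\|\psi_x(s)\|_2^2$ was absorbed into a universal constant through $E(s)\le E(0)$, here $\|\psi_{xt}(s)\|_2^2$ must be controlled via the energy $E_*$, which is precisely where the factor $E_*(0)+\int_0^L\psi_{0xx}^2\,dx$ enters explicitly. First I would start from the definition $(g\circ\psi_{xt})(t)=\int_0^L\int_0^t g(t-s)(\psi_{xt}(t)-\psi_{xt}(s))^2\,ds\,dx$ and apply H\"older's inequality to this double integral over the measure $ds\,dx$, using the conjugate exponents $q_1=\frac{p+\s-1}{p-1}$ and $q_2=\frac{p+\s-1}{\s}$. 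Note that $\frac{1}{q_1}+\frac{1}{q_2}=\frac{p-1+\s}{p+\s-1}=1$, which is exactly why the two exponents appearing in the conclusion sum to one.

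The split is arranged so that the first H\"older factor is $\big(\int_0^L\int_0^t g^{1-\s}(t-s)(\psi_{xt}(t)-\psi_{xt}(s))^2\,ds\,dx\big)^{1/q_1}$ and the second is $(g^p\circ\psi_{xt})^{1/q_2}=(g^p\circ\psi_{xt})^{\s/(p+\s-1)}$. The bookkeeping to confirm is that, after raising the factored integrand to the powers $q_1$ and $q_2$, the exponent of $g(t-s)$ recombines to $(1-\s)\cdot\frac{p-1}{p+\s-1}+p\cdot\frac{\s}{p+\s-1}=1$ and the exponent of $(\psi_{xt}(t)-\psi_{xt}(s))^2$ recombines to $\frac{p-1}{p+\s-1}+\frac{\s}{p+\s-1}=1$, so that the product indeed reconstitutes the original weight $g(t-s)(\psi_{xt}(t)-\psi_{xt}(s))^2$ and the second factor is exactly $g^p\circ\psi_{xt}$.

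Next I would estimate the first factor. Using $(\psi_{xt}(t)-\psi_{xt}(s))^2\le 2(\psi_{xt}^2(t)+\psi_{xt}^2(s))$ together with the substitution $s\mapsto t-s$, the inner integral breaks into $\big(\int_0^t g^{1-\s}(s)\,ds\big)\|\psi_{xt}(t)\|_2^2$ and $\int_0^t g^{1-\s}(t-s)\|\psi_{xt}(s)\|_2^2\,ds$. The decisive step is the uniform control of $\|\psi_{xt}(s)\|_2^2$: since $k_2-\int_0^s g\ge k_2-\int_0^\infty g>0$ by $(A1)$, one has $\|\psi_{xt}(s)\|_2^2\le cE_*(s)$, and then $E_*(s)\le c\big(E_*(0)+\int_0^L\psi_{0xx}^2\,dx\big)$ for all $s$ by \eqref{e6s2}. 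Both pieces are therefore dominated by $c\big(E_*(0)+\int_0^L\psi_{0xx}^2\,dx\big)\int_0^t g^{1-\s}(s)\,ds$, which is precisely the quantity feeding the first H\"older factor and producing the bracketed term in the statement.

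Finally, combining the two factors gives the general inequality; setting $\s=\tfrac12$ yields $\frac{p-1}{p+\s-1}=\frac{2p-2}{2p-1}$ and $\frac{\s}{p+\s-1}=\frac{1}{2p-1}$, and absorbing the fixed constant $E_*(0)+\int_0^L\psi_{0xx}^2\,dx$ into the generic $c$ then delivers \eqref{e7s2}. I expect the only delicate point to be the exponent bookkeeping in the H\"older split, namely verifying that the $(\psi_{xt}(t)-\psi_{xt}(s))^2$ weight distributes so that exactly $g^p\circ\psi_{xt}$ appears in the second factor while the leftover weight $g^{1-\s}$ is paired with the energy bound in the first; once the uniform estimate \eqref{e6s2} is in hand, the remaining manipulations are routine.
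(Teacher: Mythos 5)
Your proposal is correct and follows essentially the same approach as the paper's proof: the same H\"older interpolation of the double integral (your conjugate exponents $q_1=\tfrac{p+\s-1}{p-1}$ and $q_2=\tfrac{p+\s-1}{\s}$ are exactly the paper's $r$ and $\tfrac{r}{r-1}$), the same splitting of the weight into $g^{1-\s}$ and $g^p$ pieces, and the same uniform control $\|\psi_{xt}(s)\|_2^2\leq cE_*(s)\leq c\left(E_*(0)+\int_0^L\psi_{0xx}^2dx\right)$ via \eqref{e6s2}. No gaps; the exponent bookkeeping and the $\s=\tfrac12$ specialization match the paper exactly.
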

\begin{proof}
By setting $\displaystyle r=\frac{p+\s-1}{p-1}$ and $\displaystyle q=\frac{(p-1)(1-\s)}{p+\s-1}$, we have $\displaystyle \frac{r}{r-1}=\frac{p+\s-1}{\s}$ and $\displaystyle 1-q=\frac{\s p}{p+\s-1}$. Then exploiting H\"older's inequality and \eqref{e6s2}, we obtain
\begin{eqnarray*}
g\circ\psi_{xt}&=&\int_0^L\int_0^tg(t-s)(\psi_{xt}(t)-\psi_{xt}(s))^2dsdx\\
&=&\int_0^L\int_0^t\left[g^q(t-s)(\psi_{xt}(t)-\psi_{xt}(s))^{\frac{2}{r}}\right]\left[g^{1-q}(t-s)(\psi_{xt}(t)-\psi_{xt}(s))^{\frac{2r-2}{r}}\right]dsdx\\
&\leq&\left[\int_0^L\int_0^tg^{qr}(t-s)(\psi_{xt}(t)-\psi_{xt}(s))^2dsdx\right]^{\frac{1}{r}}\\&&\times\left[\int_0^L\int_0^tg^{\frac{(1-q)r}{r-1}}(t-s)(\psi_{xt}(t)-\psi_{xt}(s))^2dsdx\right]^{\frac{r-1}{r}}\\
&\leq&\left[\int_0^L\int_0^tg^{1-\s}(t-s)(\psi_{xt}(t)-\psi_{xt}(s))^2dsdx\right]^{\frac{p-1}{p+\s-1}}\left(g^p\circ\psi_{xt}\right)^{\frac{\s}{p+\s-1}}\\
&\leq&\left[2\int_0^L\int_0^tg^{1-\s}(s)(\psi_{xt}^2(t)+\psi_{xt}^2(t-s))dsdx\right]^{\frac{p-1}{p+\s-1}}\left(g^p\circ\psi_{xt}\right)^{\frac{\s}{p+\s-1}}\\
&\leq&\left[\frac{4}{k_2-g_0}\int_0^tg^{1-\s}(s)(E_*(t)+E_*(t-s))ds\right]^{\frac{p-1}{p+\s-1}}\left(g^p\circ\psi_{xt}\right)^{\frac{\s}{p+\s-1}}\\
&\leq&\left[c\left(E_*(0)+\int_0^L\psi_{0xx}^2dx\right)\int_0^tg^{1-\s}(s)ds\right]^{\frac{p-1}{p+\s-1}}\left(g^p\circ\psi_{xt}\right)^{\frac{\s}{p+\s-1}}.
\end{eqnarray*}
For $\s=\frac{1}{2}$, we get \eqref{e7s2}. This completes the proof.
\end{proof}
\begin{corollary}\label{c2s2}
Assume that conditions $(A1)$ and $(A2)$ hold and let $(\vp,\psi,w)$ be the strong solution of \eqref{p1}. Then,
\[\xi(t)(g\circ\psi_{xt})(t)\leq c\big(-E_*'(t)+c_1g(t)\big)^{\frac{1}{2p-1}},\qquad\forall t\geq0,\]
for some positive constant $c_1$.
\end{corollary}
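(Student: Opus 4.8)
The plan is to mirror the proof of Corollary \ref{c1s2}, with the roles of $\psi_x$, $E$, and inequality \eqref{e3s2} played respectively by $\psi_{xt}$, $E_*$, and the inequality \eqref{e7s2} from Lemma \ref{l6s2}. The only genuinely new ingredient is absorbing the extra boundary-type term that appears in the energy identity \eqref{e5s2}.

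First I would multiply both sides of \eqref{e7s2} by $\xi(t)$ and split the prefactor as $\xi(t)=\xi^{\frac{2p-2}{2p-1}}(t)\,\xi^{\frac{1}{2p-1}}(t)$, carrying the second power inside the convolution. Since $\xi$ is non-increasing we have $\xi(t)\leq\xi(t-s)$ on $[0,t]$, and hypothesis $(A2)$ in the form $\xi(t-s)g^p(t-s)\leq -g'(t-s)$ then gives
\[\xi(t)(g\circ\psi_{xt})(t)\leq c\left(\int_0^t\xi(s)g^{1/2}(s)\id s\right)^{\frac{2p-2}{2p-1}}(-g'\circ\psi_{xt})^{\frac{1}{2p-1}}(t).\]
By Lemma \ref{l1s2} with $\s=\tfrac12<2-p$, the integral $\int_0^t\xi(s)g^{1/2}(s)\id s$ is bounded by a constant, so that $\xi(t)(g\circ\psi_{xt})(t)\leq c(-g'\circ\psi_{xt})^{\frac{1}{2p-1}}(t)$.

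It then remains to dominate $(-g'\circ\psi_{xt})$ by $-E_*'(t)+c_1g(t)$. Solving the identity \eqref{e5s2} for the convolution term yields
\[-(g'\circ\psi_{xt})=-2E_*'(t)-g(t)\int_0^L\psi_{xt}^2\id x-2g(t)\int_0^L\psi_{tt}\psi_{0xx}\id x.\]
Discarding the nonpositive term $-g(t)\int_0^L\psi_{xt}^2\id x$ and estimating the last term by Young's inequality, $-2g(t)\int_0^L\psi_{tt}\psi_{0xx}\id x\leq g(t)\int_0^L\psi_{tt}^2\id x+g(t)\int_0^L\psi_{0xx}^2\id x$, the crucial point is that $E_*$ is uniformly bounded: by \eqref{e6s2}, $\int_0^L\psi_{tt}^2\id x\leq\frac{2}{\r_2}E_*(t)\leq c\big(E_*(0)+\int_0^L\psi_{0xx}^2\id x\big)$, so the whole right-hand side is $\leq c_1g(t)$ for a constant $c_1$ depending only on the initial data. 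This same $c_1$ keeps the base $-E_*'(t)+c_1g(t)$ nonnegative, so raising the inequality $-(g'\circ\psi_{xt})\leq c(-E_*'(t)+c_1g(t))$ to the power $\tfrac{1}{2p-1}\leq1$ gives the claim.

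The main (and essentially only new) obstacle compared with Corollary \ref{c1s2} is the extra term $-g(t)\int_0^L\psi_{tt}\psi_{0xx}\id x$ in $E_*'$, which destroys the monotonicity that made $E$ directly usable; this is precisely why the statement carries the corrective summand $c_1g(t)$. The resolution is the uniform energy bound \eqref{e6s2} from Lemma \ref{l5s2}, which is exactly what turns $c_1$ into a genuine constant rather than a time-dependent quantity.
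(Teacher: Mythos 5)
Your proposal is correct and follows essentially the same argument as the paper: both multiply \eqref{e7s2} by $\xi(t)$, split the power of $\xi$, use its monotonicity together with $(A2)$ and Lemma \ref{l1s2} to reach $c(-g'\circ\psi_{xt})^{\frac{1}{2p-1}}$, and then control $-g'\circ\psi_{xt}$ via the energy identity \eqref{e5s2}, Young's inequality on the $\psi_{tt}\psi_{0xx}$ term, and the uniform bound \eqref{e6s2}, exactly as in the paper's estimate \eqref{e8s2}. The only differences are cosmetic (you perform the two steps in the opposite order, and your constant $\frac{2}{\r_2}$ in the bound $\int_0^L\psi_{tt}^2dx\leq\frac{2}{\r_2}E_*(t)$ is in fact the correct one, where the paper writes $\frac{2}{\r_1}$).
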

\begin{proof}
From equation \eqref{e5s2} and inequality \eqref{e6s2} we have
\begin{eqnarray}\label{e8s2}
0\leq-g'\circ\psi_{xt}&=&-2E_*'(t)-g(t)\int_0^L\psi_{xt}^2dx-2g(t)\int_0^L\psi_{tt}\psi_{0xx}dx\notag\\
&\leq&-2E_*'(t)-2g(t)\int_0^L\psi_{tt}\psi_{0xx}dx\notag\\
&\leq&-2E_*'(t)+g(t)\int_0^L(\psi_{tt}^2+\psi_{0xx}^2)dx\notag\\
&\leq&-2E_*'(t)+g(t)\left(\frac{2}{\r_1}E_*(t)+\int_0^L\psi_{0xx}^2dx\right)\notag\\
&\leq&c\left(-E_*'(t)+c_1g(t)\right),
\end{eqnarray}
for some positive constant $c_1$. Multiplication of both sides of \eqref{e7s2} by $\xi(t)$ and use of Lemma \ref{l1s2} and inequality \eqref{e8s2} give
\begin{eqnarray*}
\xi(t)(g\circ\psi_{xt})(t)&\leq&c\left(\xi(t)\int_0^tg^{1/2}(s)ds\right)^{\frac{2p-2}{2p-1}}\big(\xi g^p\circ\psi_{xt}\big)^{\frac{1}{2p-1}}(t)\\
&\leq&c\left(\int_0^t\xi(s)g^{1/2}(s)ds\right)^{\frac{2p-2}{2p-1}}\big(-g'\circ\psi_{xt}\big)^{\frac{1}{2p-1}}(t)\\
&\leq&c\big(-E_*'(t)+c_1g(t)\big)^{\frac{1}{2p-1}}.
\end{eqnarray*}
\end{proof}
Now we estimate the third term in the right-hand side of \eqref{e6s4} as in \cite{Guesmia2013}.
\begin{lemma}\label{l1s5}
Let $(\vp,\psi,w)$ be the strong solution of \eqref{p1}. Then, for any $\ve>0$, we have
\begin{equation}\label{e2s5}
\left(\frac{\r_1k_2}{k_1}-\r_2\right)\int_0^L\vp_t\psi_{xt}dx\leq \ve E(t)+\frac{c}{\ve}(g\circ\psi_{xt}-E'(t)+g(t)),\quad\forall t\geq t_0.
\end{equation}
\end{lemma}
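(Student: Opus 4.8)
The plan is to reduce the whole lemma to a single quantitative bound on $\int_0^L\psi_{xt}^2\,dx$, after which Young's inequality closes the argument. Writing $c_0=\frac{\r_1k_2}{k_1}-\r_2$, I would first apply Young's inequality,
\[
c_0\int_0^L\vp_t\psi_{xt}\,dx\leq\ve\int_0^L\vp_t^2\,dx+\frac{c}{\ve}\int_0^L\psi_{xt}^2\,dx,
\]
and bound the first term directly from the definition \eqref{e1s2} of the energy via $\int_0^L\vp_t^2\,dx\leq\frac{2}{\r_1}E(t)$. Hence everything hinges on controlling $\int_0^L\psi_{xt}^2\,dx$ by $(g\circ\psi_{xt})(t)-E'(t)+g(t)$, and this is precisely where the hypothesis $t\geq t_0$ must be used.

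The main step is the following kernel decomposition. For $t\geq t_0$ set $\bar g(t)=\int_0^tg(s)\,ds\geq\int_0^{t_0}g(s)\,ds=:g_0^{t_0}>0$, where positivity comes from $g(0)>0$, and write
\[
\psi_{xt}(t)=\frac{1}{\bar g(t)}\int_0^tg(t-s)\big(\psi_{xt}(t)-\psi_{xt}(s)\big)\,ds+\frac{1}{\bar g(t)}\int_0^tg(t-s)\psi_{xt}(s)\,ds=:A+B.
\]
For $A$, the Cauchy--Schwarz inequality in $s$ with weight $g$ gives $\int_0^LA^2\,dx\leq\frac{1}{\bar g(t)}(g\circ\psi_{xt})(t)\leq\frac{1}{g_0^{t_0}}(g\circ\psi_{xt})(t)$, so $A$ is absorbed into the memory term. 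For $B$ I would integrate by parts in $s$, using $\psi_{xt}(s)=\partial_s\psi_x(s)$, to obtain
\[
\int_0^tg(t-s)\psi_{xt}(s)\,ds=g(t)\big(\psi_x(t)-\psi_x(0)\big)-\int_0^tg'(t-s)\big(\psi_x(t)-\psi_x(s)\big)\,ds.
\]

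From this, $\int_0^LB^2\,dx$ is controlled by $g(t)^2\int_0^L(\psi_x(t)-\psi_x(0))^2\,dx$ together with a $(-g')$-weighted convolution term, both carrying the bounded prefactor $1/\bar g(t)^2\leq(g_0^{t_0})^{-2}$. The first is $\leq c(-E'(t))+c\,g(t)$, after invoking $g(t)\int_0^L\psi_x^2\,dx\leq2(-E'(t))$ from Lemma \ref{l2s2}, the bound $g(t)^2\leq g(0)g(t)$, and the fact that $\psi_x(0)=\psi_{0x}$ is fixed data; the second is $\leq c(-g'\circ\psi_x)\leq c(-E'(t))$ by Cauchy--Schwarz with weight $(-g')$ and again Lemma \ref{l2s2}. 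Combining the estimates for $A$ and $B$ through $(A+B)^2\leq2A^2+2B^2$ yields the key bound
\[
\int_0^L\psi_{xt}^2\,dx\leq c\big((g\circ\psi_{xt})(t)-E'(t)+g(t)\big),\qquad\forall\,t\geq t_0.
\]

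Substituting this into the Young splitting and re-absorbing the constant $2/\r_1$ into $\ve$ produces exactly \eqref{e2s5}. I expect the genuinely delicate point to be the estimate for $B$: the integration by parts in $s$ is what trades the derivative-loss quantity $\psi_{xt}$ for the energy-controlled $\psi_x$, at the price of the kernel factors $g(t)$ and $g'$, and the restriction $t\geq t_0$ is exactly what keeps $1/\bar g(t)$ bounded. Some bookkeeping is also needed to ensure every constant stays independent of $t$, in particular to verify that the fixed-data term $g(t)^2\int_0^L\psi_{0x}^2\,dx\leq c\,g(t)$ contributes only a harmless multiple of $g(t)$.
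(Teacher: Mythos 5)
Your proof is correct and follows essentially the same route as the paper: the paper uses the identical decomposition of $\psi_{xt}(t)$ through the normalized kernel $\frac{1}{\int_0^t g(s)\,ds}\int_0^t g(t-s)(\cdot)\,ds$ (with $t\geq t_0$ providing the lower bound on the denominator), the same integration by parts in $s$ that trades $\psi_{xt}(s)$ for $\psi_x$ at the price of the $g(t)$ boundary terms and a $-g'\circ\psi_x$ term, and the same energy-identity bounds from Lemma \ref{l2s2}. The only cosmetic difference is ordering: you apply Young's inequality first and reduce everything to a bound on $\int_0^L\psi_{xt}^2\,dx$, whereas the paper decomposes the cross term $\int_0^L\vp_t\psi_{xt}\,dx$ first and applies Young's inequality to each resulting piece.
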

\begin{proof}
\begin{eqnarray}\label{e1's5}
\left(\frac{\r_1k_2}{k_1}-\r_2\right)\int_0^L\vp_t\psi_{xt}dx&=&\frac{\left(\frac{\r_1k_2}{k_2}-\r_2\right)}{\int_0^tg(s)ds}\int_0^L\vp_t\int_0^tg(t-s)(\psi_{xt}(t)-\psi_{xt}(s))dsdx\nonumber\\
&&+\frac{\left(\frac{\r_1k_2}{k_1}-\r_2\right)}{\int_0^tg(s)ds}\int_0^L\vp_t\int_0^tg(t-s)\psi_{xt}(s)dsdx.
\end{eqnarray}
By observing that $\displaystyle \int_0^tg(s)ds\geq \int_0^{t_0}g(s)ds$, for all $t\geq t_0$ and exploiting Young's inequality and\\
\\
Lemma \ref{l3s2} (for $\psi_{xt}$), we get, for $\ve>0$ and $t\geq t_0$,
\[\frac{\left(\frac{\r_1k_2}{k_1}-\r_2\right)}{\int_0^tg(s)ds}\int_0^L\vp_t\int_0^tg(t-s)(\psi_{xt}(t)-\psi_{xt}(s))dsdx\leq\frac{\ve}{4}\r_1\int_0^L\vp_t^2dx+\frac{c}{\ve}(g\circ\psi_{xt}).\]
On the other hand, by integration by parts and using Lemma \ref{l3s2} (for $-g'$ and $\psi_x$) and the fact that $E$ is non-increasing, we get
\begin{eqnarray*}
\lefteqn{\frac{\left(\frac{\r_1k_2}{k_1}-\r_2\right)}{\int_0^tg(s)ds}\int_0^L\vp_t\int_0^tg(t-s)\psi_{xt}(s)dsdx}\\&=&\frac{\left(\frac{\r_1k_2}{k_1}-\r_2\right)}{\int_0^tg(s)ds}\int_0^L\vp_t\left(g(0)\psi_x-g(t)\psi_{0x}+\int_0^tg'(t-s)\psi_{x}(s)ds\right)dx\\
&=&\frac{\left(\frac{\r_1k_2}{k_1}-\r_2\right)}{\int_0^tg(s)ds}\int_0^L\vp_t\left(g(t)(\psi_x-\psi_{0x})-\int_0^tg'(t-s)(\psi_x(t)-\psi_{x}(s))ds\right)dx\\
&\leq&\frac{\ve}{4}\r_1\int_0^L\vp_t^2dx +\frac{c}{\ve}g(t)\int_0^L(\psi_x^2+\psi_{0x}^2)dx-\frac{c}{\ve}g'\circ\psi_x\\
&\leq&\frac{\ve}{4}\r_1\int_0^L\vp_t^2dx+\frac{c}{\ve}E(0)g(t)-\frac{c}{\ve}g'\circ\psi_x.
\end{eqnarray*}
Inserting the last two inequalities in \eqref{e1's5}, we get \eqref{e2s5}.
\end{proof}

\begin{theorem}\label{t1s5}
Let \[(\vp_0,\vp_1)\in \left(H^2(0,L)\cap H^1_0(0,L)\right)\times H^1_0(0,L)\] and \[(\psi_0,\psi_1),\,\,(w_0,w_1)\in (H^2_*(0,L)\cap H^1_*(0,L))\times H^1_*(0,L).\]  Assume that conditions $(A1)$, $(A2)$ hold and that
\[\frac{\r_1}{k_1}\neq\frac{\r_2}{k_2}\qquad\mathrm{and}\qquad k_1=k_3.\]
Then for $l$ small enough and for any $t_0>0$, there exists a positive constant $C$ that may depend on the initial data but independent of $t$, for which the strong solution of \eqref{p1} satisfies, for $t> t_0$,
\begin{equation}\label{e5s5}
E(t)\leq C\left(\frac{1}{\int_{t_0}^t\xi^{2p-1}(s)ds}\right)^{\frac{1}{2p-1}},\qquad\mathrm{for\,\,\,\,}1\leq p<\frac{3}{2}.
\end{equation}
\end{theorem}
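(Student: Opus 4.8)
The plan is to recycle the Lyapunov machinery built for Theorem~\ref{t1s4} and then handle separately the single term that the non-equal-speed hypothesis fails to annihilate. First I would observe that the estimate \eqref{e6s4} for $\calL=NE+\sum_{j=1}^{6}N_jI_j$ was obtained using only the smallness of $l$ and the choices of $N_j,\d_i,\ve_i$; the equal-speed relation was invoked \emph{afterwards}, solely to cancel the two cross terms. Hence \eqref{e6s4} stays valid in the present setting, and since we still assume $k_1=k_3$ the term $\rho_1(\tfrac{k_3}{k_1}-1)\int_0^L\vp_{xt}w_t\,dx$ vanishes, leaving
\[
\calL'(t)\le -kE(t)+c\,(g\circ\psi_x)(t)+\Big(\tfrac{k_2\rho_1}{k_1}-\rho_2\Big)\int_0^L\vp_t\psi_{xt}\,dx,\qquad t\ge t_0 .
\]
Because $\tfrac{\rho_1}{k_1}\neq\tfrac{\rho_2}{k_2}$, the last term no longer disappears, and controlling it is the crux of the proof.

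The key new ingredient is Lemma~\ref{l1s5}, which bounds this cross term by $\ve E+\tfrac{c}{\ve}\big(g\circ\psi_{xt}-E'+g\big)$ at the price of introducing the second-order energy $E_*$ and the higher-order memory term $g\circ\psi_{xt}$; this is precisely why Theorem~\ref{t1s5} is stated for \emph{strong} solutions. Inserting \eqref{e2s5}, choosing $\ve$ small enough to absorb $\ve E$ into $-kE$, and setting $\mathscr L:=\calL+cE\sim E$ to absorb the $-E'$ contribution, I would arrive at
\[
\mathscr L'(t)\le -kE(t)+c\,(g\circ\psi_x)(t)+c\,(g\circ\psi_{xt})(t)+c\,g(t),\qquad t\ge t_0 .
\]
Multiplying by $\xi(t)$ and applying Corollaries~\ref{c1s2} and \ref{c2s2} converts the two memory terms into $(-E')^{1/(2p-1)}$ and $\big(-E_*'+c_1g\big)^{1/(2p-1)}$.

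Next I would multiply by $(\xi E)^{2p-2}$ and apply Young's inequality (with exponents $\tfrac{2p-1}{2p-2}$ and $2p-1$, so that the two memory terms each produce a small multiple of $(\xi E)^{2p-1}$ together with $-E'$ and $-E_*'$ respectively). Using $\xi'\le0$ and $E'\le0$ to pass the outer factor through the derivative, and setting $\calF:=\xi^{2p-1}E^{2p-2}\mathscr L+cE+cE_*$, this yields
\[
\calF'(t)\le -\lambda\,(\xi E)^{2p-1}(t)+c\big(g(t)+\xi(t)g(t)\big),\qquad t\ge t_0 .
\]
The decisive departure from the equal-speed case is that $-E_*'$ \emph{cannot} be absorbed into an $E$-equivalent functional: Lemma~\ref{l5s2} only yields $E_*$ bounded, not decaying. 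Instead it is merely integrable, and so are $g$ and $\xi g$, the latter by the computation underlying Lemma~\ref{l1s2}, valid since $p<\tfrac32<2$.

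Integrating over $(t_0,t)$ and using $\calF\ge0$ therefore gives $\int_{t_0}^{\infty}\xi^{2p-1}E^{2p-1}\,ds<+\infty$ rather than a pointwise Gronwall-type inequality. Finally, the monotonicity of $E$ gives $E^{2p-1}(t)\int_{t_0}^t\xi^{2p-1}\le\int_{t_0}^t\xi^{2p-1}E^{2p-1}\le C$, which is exactly \eqref{e5s5}; for $p=1$ the factor $(\xi E)^{2p-2}$ is trivial and one argues directly. I expect the main obstacle to be this very point: the non-decaying higher-order energy forces an ``integrate-then-use-monotonicity'' argument in place of a differential-inequality one, and this is precisely what degrades the exponent from $\tfrac{1}{2p-2}$ in the equal-speed case to $\tfrac{1}{2p-1}$ here.
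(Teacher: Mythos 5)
Your proposal is correct and follows essentially the same route as the paper's own proof: reuse of \eqref{e5s4} with $k_1=k_3$ killing one cross term, Lemma~\ref{l1s5} to control the other, Corollaries~\ref{c1s2} and \ref{c2s2} after multiplying by $\xi(t)$, then multiplication by $(\xi E)^{2p-2}$ with Young's inequality, and finally the integrate-then-use-monotonicity argument with the functional $\xi^{2p-1}E^{2p-2}\calL+cE+cE_*$, which is exactly how the paper obtains the exponent $\tfrac{1}{2p-1}$. Your closing diagnosis --- that the bounded-but-not-decaying second-order energy $E_*$ and the $g(t)$ source terms preclude a Gronwall-type differential inequality and force the weaker integrated estimate --- matches the paper's reasoning precisely.
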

\begin{proof}
Repeating the steps of the proof of Theorem \ref{t1s4} up to inequality \eqref{e5s4}, then inserting \eqref{e2s5}  into \eqref{e5s4} we obtain
\[\calL'(t)\leq-(k-\ve)E(t)+\left[N-c\left(1+\frac{1}{\ve}\right)\right]E'(t)+cg\circ\psi_x+\frac{c}{\ve}g\circ\psi_{xt}+\frac{c}{\ve}g(t),\qquad\forall\,t\geq t_0.\]
Now we choose $\ve$ so small that $k-\ve>0$, and then pick $N>c\left(1+\frac{1}{\ve}\right)$ to get
\[\calL'(t)\leq-k_0E(t)+c(g\circ\psi_x+g\circ\psi_{xt})+cg(t),\qquad\forall\,t\geq t_0,\]
for some $k_0>0$.
We then multiply both sides of the above inequality by $\xi(t)$ and use Corollaries \ref{c1s2} and \ref{c2s2} to get
\begin{eqnarray*}
\xi(t)\calL'(t)&\leq&-k_0\xi(t)E(t)+c\xi(t)(g\circ\psi_x+g\circ\psi_{xt})+c\xi(t)g(t)\\
&\leq&-k_0\xi(t)E(t)+c\left[\big(-E'(t)\big)^{\frac{1}{2p-1}}+\big(-E_*'(t)+c_1g(t)\big)^{\frac{1}{2p-1}}\right]+c\xi(t)g(t).
\end{eqnarray*}
Next, we set $\a=2p-2$, then multiply both sides of the above inequality by $(\xi E)^\a(t)$ and exploit Young's inequality, with $\displaystyle q=\frac{\a+1}{\a}$ and $q'=\a+1$, to obtain
\[\xi^{\a+1}(t)E^\a(t)\calL'(t)\leq-(k_0-c\g)(\xi E)^{\a+1}(t)-cE'(t)-cE_*'(t)+c_1g(t)+c\xi^{\a+1}(t)E^\a(t)g(t),\,\,\,\,\forall \g>0.\]
We choose $\g>0$ so small such that $\l_2:=k_0-c\g>0$ and use the non-increasing property of $\xi$ and $g$ to get
\[(\xi^{\a+1}E^\a\calL+cE+cE_*)'(t)\leq-\l_2(\xi E)^{\a+1}(t)+c\xi^{\a+1}(t)E^\a(t)g(t)+c_1g(t),\]
which implies that
\[\l_2(\xi E)^{\a+1}(t)\leq -(\xi^{\a+1}E^\a\calL+cE+cE_*)'(t)+c\xi^{\a+1}(t)E^\a(t)g(t)+c_1g(t).\]
Then integration over $(t_0,t)$ together with the non-increasing property of $E$ and $\xi$, and the hypothesis $(A1)$ yield, for $t\geq t_0$,
\begin{eqnarray*}
\l_2E^{\a+1}(t)\int_{t_0}^t\xi^{\a+1}(s)ds&\leq&\l_2\int_{t_0}^t(\xi E)^{\a+1}(s)ds\leq-(\xi^{\a+1}E^\a\calL+cE+cE_*)(t)\\
&&+(\xi^{\a+1}E^\a\calL+cE+cE_*)(0)+\int_0^L\psi_{0xx}^2dx\\&&+(c\xi^{\a+1}(0)E^\a(0)+c_1)\int_{t_0}^tg(s)ds\\
&\leq&(\xi^{\a+1}E^\a\calL+cE+cE_*)(0)+\int_0^L\psi_{0xx}^2dx\\&&+(c\xi^{\a+1}(0)E^\a(0)+c_1)\int_0^\infty g(s)ds.
\end{eqnarray*}
Therefore, we get
\[E(t)\leq C\left(\frac{1}{\int_{t_0}^t\xi^{2p-1}(s)ds}\right)^{\frac{1}{2p-1}},\qquad\forall t> t_0.\]
This completes the proof of the Theorem \ref{t1s5}.
\end{proof}
\begin{example}
Let $\displaystyle g(t)=e^{-at}$, where $a>0$. Then $g'(t)=-\xi(t)g(t)$ with $\xi(t)=a$. It follows from \eqref{e5s5} that for any fixed $t_0>0$, there exists $C>0$ such that \[E(t)\leq\frac{C}{t-t_0},\qquad\forall\,t> t_0.\]
\end{example}
\begin{example}
Consider the same function $g$ as in Example \ref{ex1s4} and write $g'$ as in Example \ref{ex1s4}. Then it follows from \eqref{e5s5} that for any fixed $t_0>0$, there exists $C>0$ such that
\[E(t)\leq C\left(\frac{1}{\int_{t_0}^t\xi^{2p-1}(s)ds}\right)^{\frac{1}{2p-1}}=\frac{c}{(1+t)^{\frac{q}{q+2}}}\qquad\forall\,t> t_0.\]
For more examples, see \cite{Guesmia2013}.
\end{example}


\section{Full discrete problem}\label{sec6}

In this section, we introduce a scheme for the problem based on $P_{1}$%
-finite element method in space and implicit Euler scheme for time
discretization. Then we draw graphs for the discredited energy showing it's
decay in both cases, polynomial and exponential. Finally, we implement the
approximation of the solutions $\varphi ,$ $\psi $ and $w$ in $3D$ and their
cross section at $x=0.5$. 

\subsection{Finite element setup}

We denote by $(\Gamma _{h})_{h}$ a partition of $\Omega $ which fulfills the
following conditions:
\begin{enumerate}
\item $\Gamma _{h}=\{R\subset \bar{\Omega};$ $R$ is closed in $\Omega \}$;
\item $\forall (R,R^{\prime })\in \Gamma _{h}\times \Gamma _{h}; \ \left\vert
R\right\vert =\left\vert R^{\prime }\right\vert $, where their intersection
is either empty or an end point;
\item $\bar{\Omega}=\bigcup\limits_{R\in \Gamma _{h}}R$.
\end{enumerate}
We define the uniform partition of $\Omega $ as $0=x_{0}<x_{1}<\cdots <x_{s}$
and denote the length of $(x_{j},x_{j+1})$ as $h=\frac{L}{s}$. Now for
time discretization, denote by $\Delta t=\frac{T}{N}$ the step time, where $T$
is the total time and $N$ is a positive integer. Finally we define the discrete finite
element space by
\[ s_{h}=\{u_{h}\in H^{1}(0, L):\forall R\in \Gamma _{h};u_{h}|_{R}\in P_{1}(R)\}, \]
where $P_{k}(R)$ denotes the space of restrictions of $R$ of polynomials
with one variable and of order less than or equal to $k$.

Now we introduce the scheme and the discrete energy by using implicit Euler
scheme
\[
\begin{cases}
\frac{\rho _{1}}{\Delta t}(\Phi _{h}^{n}-\Phi _{h}^{n-1},\bar{\varphi}%
_{h})+ k _{1}(\varphi _{h,x}^{n}+\psi _{h}^{n}+lw_{h}^{n},\bar{\varphi}%
_{h,x})-lk _{3}(w_{h,x}^{n}-l\varphi _{h}^{n},\bar{\varphi}_{h})=0 \\ 
\frac{\rho _{2}}{\Delta t}(\Psi _{h}^{n}-\Psi _{h}^{n-1},\bar{\psi}%
_{h})+k _{2}(\psi _{h,x}^{n},\bar{\psi}_{x})+k _{1}(\varphi
_{h,x}^{n}+\psi _{h}^{n}++lw_{h}^{n},\bar{\psi}_{h}) \\ 
\qquad\qquad\qquad\qquad -\Delta t\sum\limits_{m=1}^{n}g(t_{n-m})(\psi _{h,x}^{m},\bar{\psi}_{h,x})=0
\\ 
\frac{\rho _{1}}{\Delta t}(W_{h}^{n}-W_{h}^{n-1},\bar{w}_{h})+k
_{3}(w_{h,x}^{n}-l\varphi _{h}^{n},\bar{w}_{h,x})+k _{1}l(\varphi
_{h,x}^{n}+\psi _{h}^{n}++lw_{h}^{n},\bar{w}_{h})=0%
\end{cases}
\] 
where $t_{j}=j\Delta t$ and
\begin{eqnarray*}
E^{n} &=& \rho _{1}\left\vert \left\vert \Phi _{h}^{n}\right\vert \right\vert
^{2}+\rho _{1}\left\vert \left\vert W_{h}^{n}\right\vert \right\vert
^{2}+\rho _{2}\left\vert \left\vert \Psi _{h}^{n}\right\vert \right\vert
^{2}+k _{1}\left\vert \left\vert \varphi _{h,x}^{n}+\psi
_{h}^{n}+lw_{h}^{n}\right\vert \right\vert ^{2} \\
&& + k _{3}\left\vert
\left\vert w_{h,x}^{n}-l\varphi _{h}^{n}\right\vert \right\vert ^{2}+k
_{2}\left\vert \left\vert \psi _{h,x}^{n}\right\vert \right\vert
^{2}- \left( \int\limits_{0}^{t_{n}}g(t) dt \right) \left\vert \left\vert \psi
_{h,x}^{n}\right\vert \right\vert ^{2} \\
&& +\frac{1}{2}\Delta
t\int\limits_{0}^{L}\sum\limits_{m=1}^{n}g(t_{n-m})(\psi _{h,x}^{n}-\psi
_{h,x}^{m})^{2}dx
\end{eqnarray*}
\subsection{Numerical Experiments}

By using the following data
\[ k_{1} = k_{2} = k_{3} = 1,\ \rho_{1} = \rho_{2} = 0.1, \ \Delta t = 0.012, \ h = 0.024, \ T=7.4 \ \ \mathrm{and} \ \ g(x)=e^{-3x}; \]
we draw the solutions $\varphi ,$ $\psi ,$ and $w$ in $3D$ (see Figures \ref{fig:1}, \ref{fig:2} and \ref{fig:3}, respectively) and their cross section at $x=0.5$ (see Figures \ref{fig:4}, \ref{fig:5} and \ref{fig:6}, respectively).
\begin{figure}
	\begin{center}
	\includegraphics[width=5in]{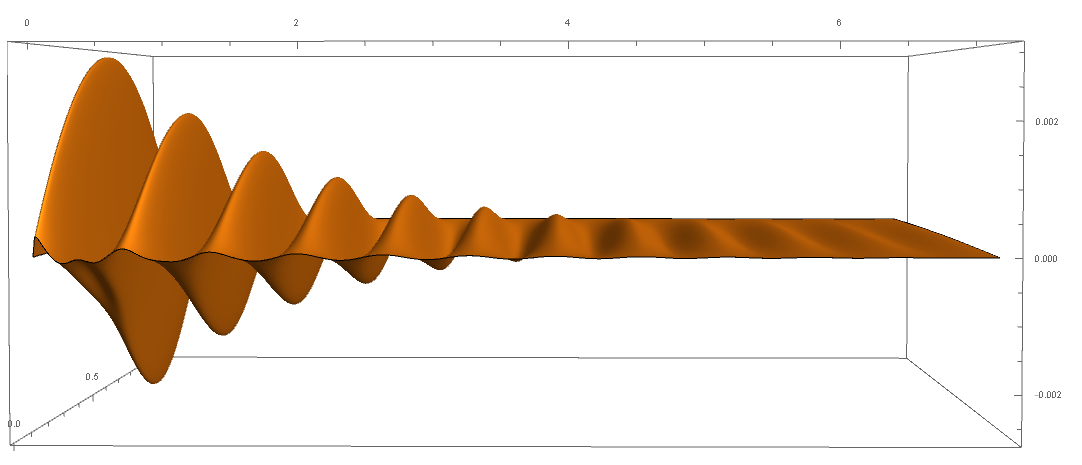}
		\caption{The evolution in time and space of $\varphi$}\label{fig:1}

	\end{center}
\end{figure}
\begin{figure}
	\begin{center}
		\includegraphics[width=5in]{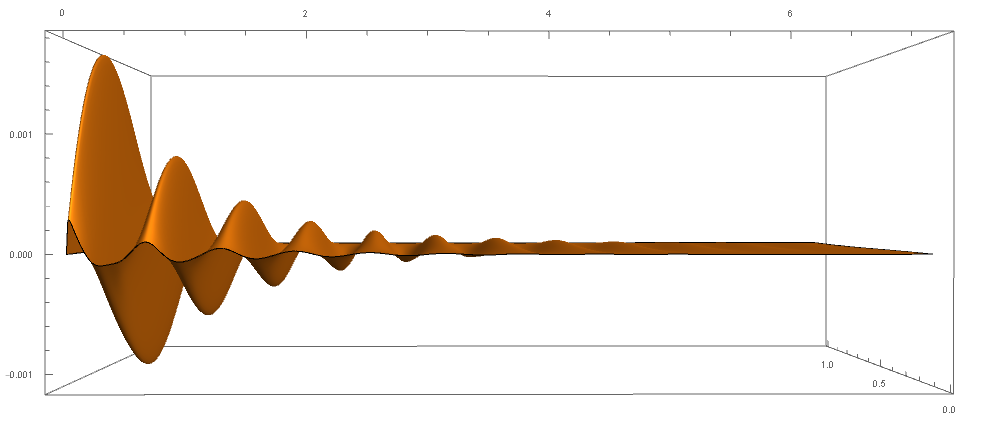}
		\caption{The evolution in time and space of $\psi$}\label{fig:2}

	\end{center}
\end{figure}
\begin{figure}
	\begin{center}
		\includegraphics[width=5in]{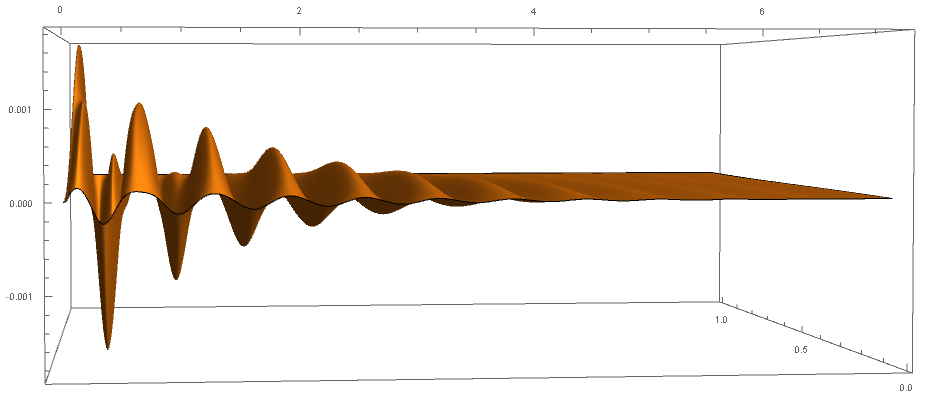}
		\caption{The evolution in time and space of $w$}\label{fig:3}

	\end{center}
\end{figure}
\begin{figure}
	\begin{center}
		\includegraphics[width=5in]{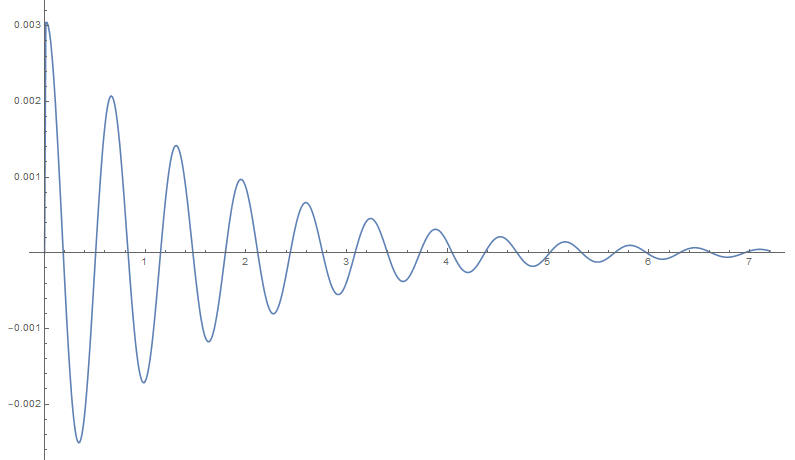}
		\caption{The evolution in time of $\varphi$ at $x=0.5$}\label{fig:4}

	\end{center}
\end{figure}
\begin{figure}
	\begin{center}
		\includegraphics[width=5in]{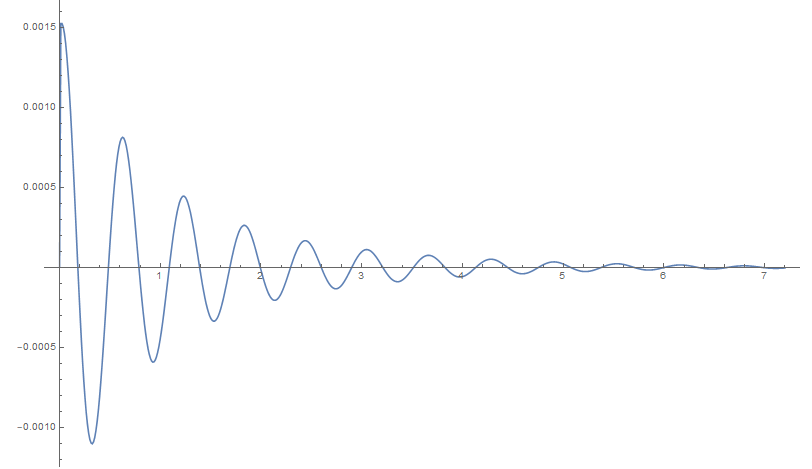}
		\caption{The evolution in time of $\psi$ at $x=0.5$}\label{fig:5}

	\end{center}
\end{figure}
\begin{figure}
	\begin{center}
		\includegraphics[width=5in]{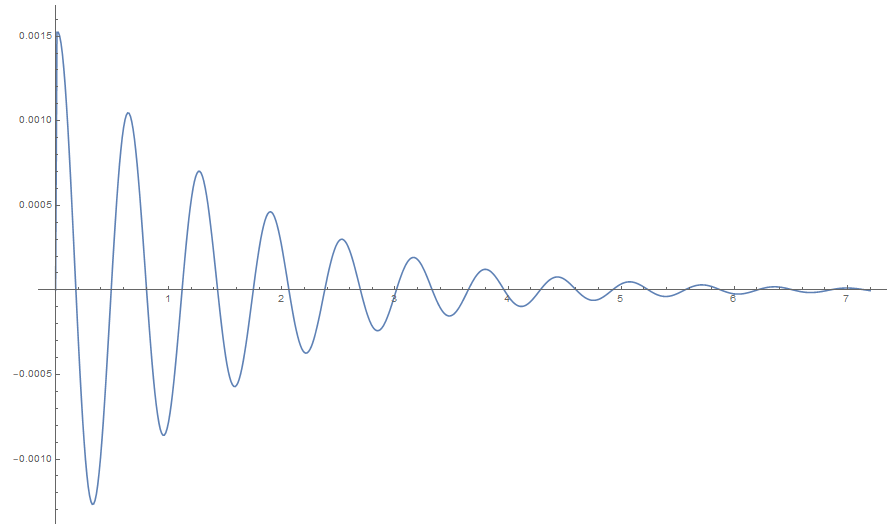}
		\caption{The evolution in time of $w$ at $x=0.5$}\label{fig:6}

	\end{center}
\end{figure}

For the energy we have two cases, taking the conditions of equal and non-equl speeds of wave propagation. 

If $\frac{k _{1}}{k _{2}}=\frac{\rho _{1}}{\rho _{2}}$ and $k
_{1}=k _{3}$ we obtain an exponential decay by using the same data taken for the solutions as shown in the following Figures \ref{fig:7} -- \ref{fig:10}.
\begin{figure}
	\begin{center}
		\includegraphics[width=5in]{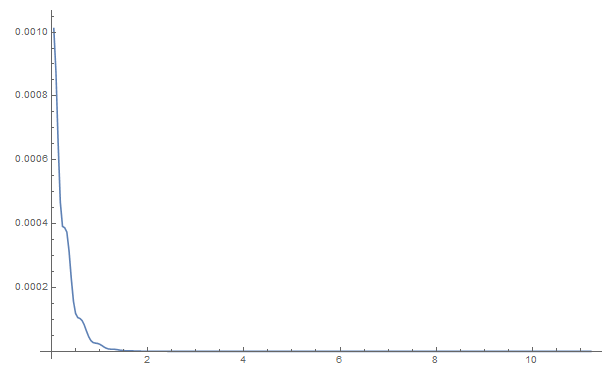}
		\caption{The evolution in time of $E^n$}\label{fig:7}

	\end{center}
\end{figure}
\begin{figure}
	\begin{center}
		\includegraphics[width=5in]{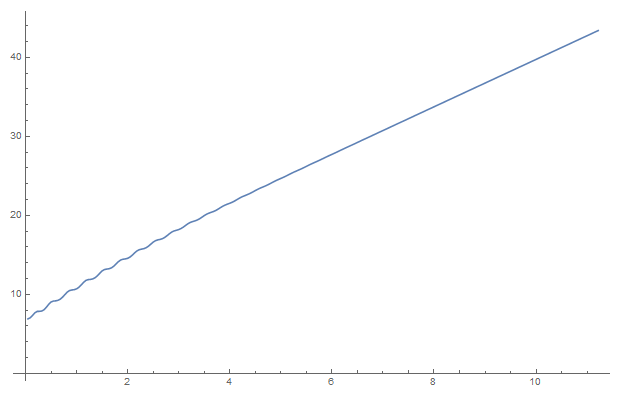}
		\caption{The evolution in time of $ln(E^n)$ that shows the exponential decay}\label{fig:8}

	\end{center}
\end{figure}
\begin{figure}
	\begin{center}
		\includegraphics[width=5in]{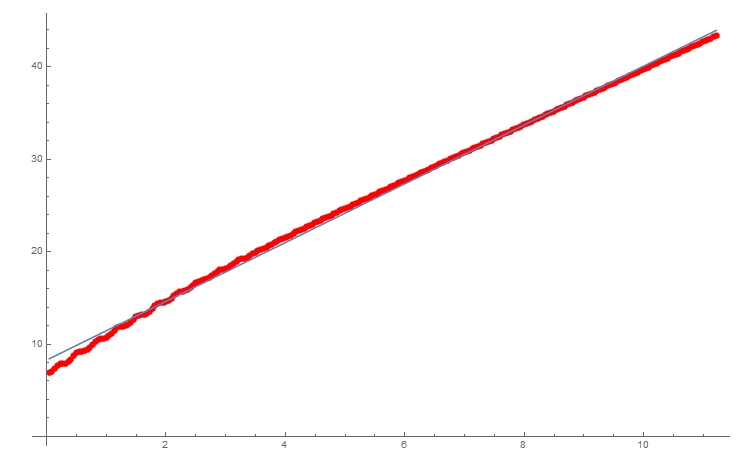}
		\caption{The evolution in time of $ln(E^n)$ with it's regression line}\label{fig:9}

	\end{center}
\end{figure}
\begin{figure}
	\begin{center}
		\includegraphics[width=5in]{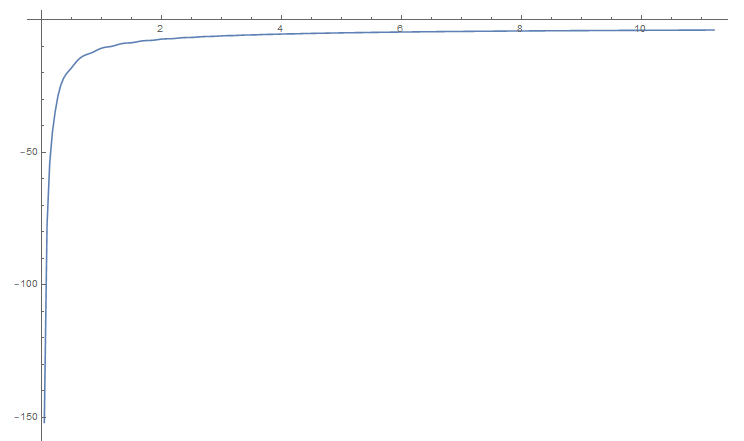}
		\caption{The evolution in time of $ln(E^n)/t$ }\label{fig:10}

	\end{center}
\end{figure}

If $\frac{k _{1}}{k _{2}}\neq \frac{\rho _{1}}{\rho _{2}}$ and $%
k _{1}\neq k _{3}$ we obtain a polynomial decay by taking the following data
$k _{1}=5$, $k _{2}=k _{3}=1,$ $\rho _{1}=0.02$, $\rho _{2}=0.1,$ $\Delta
t=0.03125,$ $h=0.0625,~$ and total time $T=16.4.$ with $g(x)=1/(x+1)^2$
 as show in the following Figures \ref{fig:11} -- \ref{fig:13}.
\begin{figure}
	\begin{center}
		\includegraphics[width=5in]{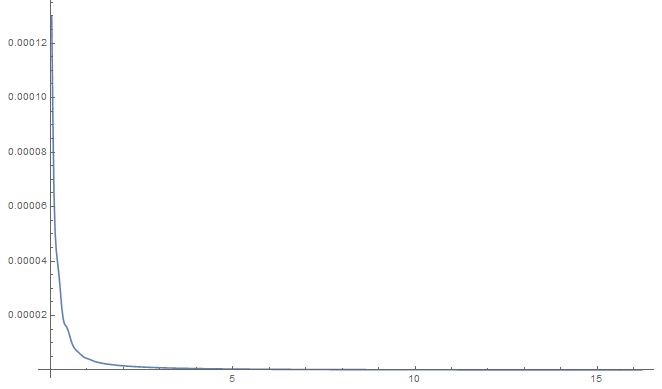}
		\caption{The evolution in time of $E^n$ }\label{fig:11}

	\end{center}
\end{figure}
\begin{figure}
	\begin{center}
		\includegraphics[width=5in]{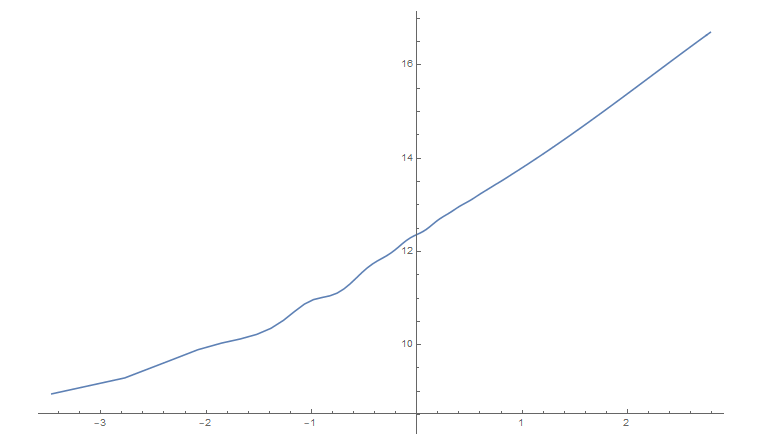}
		\caption{The variation of $-ln(E^n)$ with respect to $ln(t)$}\label{fig:12}

	\end{center}
\end{figure}
\begin{figure}
	\begin{center}
		\includegraphics[width=5in]{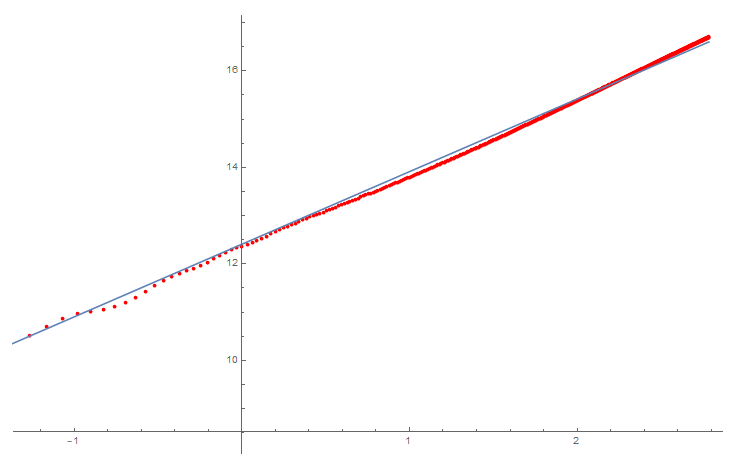}
		\caption{The variation of $-ln(E^n)$ with respect to $ln(t)$ with it's regression line}\label{fig:13}

	\end{center}
\end{figure}
\bigskip


\section*{\small{Acknowledgement}}
The authors would like to express their gratitude to King Fahd University of Petroleum and Minerals (KFUPM) for its continuous support. This work is partially funded by KFUPM under Project SB181018.


\bibliographystyle{acm}
\bibliography{Finite_Memory_Bresse_System}
\end{document}